\documentclass[10pt]{amsart}

\setlength{\textwidth}{14.cm}
\setlength{\textheight}{21.5cm}

\usepackage{geometry,graphicx,amssymb,amsmath,amsbsy,eucal,amsfonts,mathrsfs,amscd,bm,tcolorbox,amsthm}

\usepackage{tikz}
\usepackage{subcaption}
\geometry{
    letterpaper,
    left=   1.2in,
    right=  1.2in,
    top=    1.25in,
    bottom= 1.25in
}
\linespread{1.}

\numberwithin{equation}{section}

\allowdisplaybreaks[3]

\newtheorem{theorem}{Theorem}[section]
\newtheorem{lemma}[theorem]{Lemma}
\newtheorem{assumption}[theorem]{Assumption}
\newtheorem{corollary}[theorem]{Corollary}
\newtheorem{proposition}[theorem]{Proposition}
\theoremstyle{definition}

\theoremstyle{remark}
\newtheorem{remark}[theorem]{Remark}
\newtheorem{example}[theorem]{Example}

\newcommand{\p}{{\partial}}

\newcommand{\bl}{\bigl\langle}
\newcommand{\br}{\bigr\rangle}

\newcommand{\bld}[1]{\boldsymbol{#1}}

\newcommand{\bn}{\bld{n}}

\newcommand{\Dt}{\Delta t}
\newcommand{\uu}{\mathsf{u}}
\newcommand{\ww}{\mathsf{w}}

\newcommand{\lla}{\mathsf{l}}

\newcommand{\Lla}{\Lambda}
\newcommand{\LL}{\mathsf{L}}


\newcommand{\pdt}{\partial_{\Delta t}}
\newcommand{\pt}{\partial_{t}}
\newcommand{\dt}{\Delta t}
\newcommand{\Z}{\mathsf{Z}}
\newcommand{\Ss}{\mathsf{S}}
\newcommand{\J}{\mathsf{J}}

\title[Correction Methods for Interface Problems]{A second-order correction method for loosely coupled discretizations applied to parabolic-parabolic interface problems}

\author{Erik Burman}
\address{$^1$Department of Mathematics, University College London, London, UK–WC1E 6BT, United Kingdom}
\email{e.burman@ucl.ac.uk}
\author{Rebecca Durst}
\address{$^2$Department of Mathematics, University of Pittsburgh, Pittsburgh, Pennsylvania 15261, USA}
\email{rebecca\_durst@alumni.brown.edu}
\author{Miguel A. Fern\'andez}
\address{$^3$Sorbonne Universit\'e \& CNRS, UMR 7598 LJLL, 75005 Paris France -- Inria, 75012 Paris, France}
\email{miguel.fernandez@inria.fr}
\author{Johnny Guzm\'an}
\address{$^4$Brown University}
\email{johnny\_guzman@brown.edu}
\author{Sijing Liu}
\address{$^5$The Institute for Computational and Experimental Research in Mathematics, Brown University, Providence, RI 02912, USA}
\email{sijing\_liu@brown.edu}

\begin{document}

\begin{abstract}
  We consider a parabolic-parabolic interface problem and  construct a loosely coupled prediction-correction scheme based on the Robin-Robin splitting method analyzed in {\em [J. Numer. Math., 31(1):59--77, 2023]}. We show that the errors of the correction step converge at $\mathcal O((\dt)^2)$, under suitable convergence rate assumptions on the discrete time derivative of the prediction step, where $\dt$ stands for the time-step length. Numerical results are shown to support our analysis and the assumptions. 
\end{abstract}

\maketitle


\section{Introduction}
A number of studies have been reported in the literature on loosely coupled schemes for incompressible fluid-structure interaction (FSI) with  thick-walled solids  (see, e.g., \cite{burman2009stabilization,BHS14,burman2014explicit,bukac-et-al-14,fernandez-mullaert-16,serino-et-al-19,bukac-21,gigante-vergara-21b,gigante-vergara-21,BUCELLI2023112326}). In most of these works, either no rigorous error analysis is carried out or error estimates are provided which are not uniform with respect to the spatial discretization.
Robin-Robin loosely coupled schemes, first proposed in \cite{burman2014explicit} for FSI problems, were recently analyzed in \cite{bukavc2021refactorization, burman-durst-guzman-19, burman-durst-guzman-fernandez, burman2023loosely, burman2023robin,durst-22}. Both the parabolic-parabolic and the hyperbolic-parabolic couplings have been considered in \cite{burman2023loosely}. The FSI case corresponds to the 
hyperbolic-parabolic coupling with the additional difficulty coming from the fluid incompressibility constraint \cite{bukavc2021refactorization,burman-durst-guzman-fernandez,durst-22,burman2023robin}. The method is shown to be unconditionally stable and, for the first time, with a splitting error that is spatially uniform. Sub-optimal convergence rates in time of order $\mathcal O(\sqrt{\dt})$ were first proved in \cite{bukavc2021refactorization, burman-durst-guzman-19, burman-durst-guzman-fernandez}. However,  this was later improved to nearly first-order accuracy in \cite{durst-22,burman2023loosely,burman2023robin}.

By leveraging the good stability properties and spatially uniform splitting error of the method in \cite{burman-durst-guzman-fernandez,burman2023robin}, our aim is to construct a higher-order scheme in time by building on the prediction-correction methodology \cite{bohmer1984defect},  with  the  Robin-Robin loosely coupled method as the prediction step. Although our ultimate goal is to develop a higher-order method for FSI problems, in this paper we start by constructing and analyzing a method for a coupled parabolic-parabolic problem. Indeed, the parabolic-parabolic problem is a classical model problem for fluid-fluid interaction, but it also involves some of the difficulties present in linear FSI problems, making it a good first step. It should be mentioned that correction methods have been used widely for FSI problems; see for example,  \cite{badia2008fluid,burman2009stabilization,BHS14,burman2014explicit,burman-durst-guzman-fernandez,burman2023robin}. Burman and Fern\'andez \cite{burman2009stabilization} already argued that one should expect a $\dt$ improvement for every correction step.  However, there does not seem to be an analysis of this for FSI problems (or for parabolic-parabolic problems). A method based on subcycling (several correction steps) was proposed in \cite{bene15,bene17}, with linear convergence in the number of steps. In a  domain decomposition framework, focusing on a multi-timestep approach, a Robin-Robin coupling for time-dependent advection--diffusion was introduced in \cite{CL19} with numerical investigation of the stability.

In this paper, we aim to provide rigorous evidence of this by proving second-order convergence of a prediction-correction method with a single correction step, under an assumption of the convergence of the prediction step. The guiding principle of our correction method is to have discrete time differences of the prediction step error appearing in the right-hand side of the correction step error equations. Then, one uses the fact that the discrete time differences converge with second-order accuracy.  We should mention that this strategy has also been used in \cite{aggul2018defect} for a non-linear fluid problem, and the idea goes back to the work of Stetter and B\"ohmer et al. \cite{Stett77,bohm81,bohmer1984defect}. Typically, in the previous work on defect-correction for interface problems \cite{aggul2018defect}, the physical coupling conditions in fluid-fluid interaction introduced some dissipation, which made it possible to control the splitting error.  This is often the case for fluid-fluid interactions modelling atmosphere-ocean interaction, where the two systems are coupled through the drag force on the ocean surface \cite{connors2009partitioned,CHL12,CH12,ZHS18,ZLCJ20,ZHS20,SBPK23,LHH24}. In our case, no such dissipative mechanism is present, and therefore the arguments of \cite{aggul2018defect} fail. The conservative coupling presents an important challenge for the analysis, both for the low-order splitting scheme and the defect-correction scheme. Indeed, to the best of our knowledge, this is the first time second-order convergence is proved for a splitting scheme where the coupling conditions are conservative. A key milestone towards achieving second-order convergence of the correction step was reached in an accompanying paper \cite{report}, where we prove that many of the time difference errors of the prediction step are indeed second-order accurate in the $L^2$-norm. When the interface is perpendicular to the two sides of the domain, we also prove that the $H^2$-norm of the time difference errors of the prediction step also converges as $\mathcal O((\dt)^2)$. This $H^2$ estimate turns out to be a crucial ingredient in the analysis of the correction method. Although it is nontrivial to prove the $H^2$ estimate in a more general setting, numerical results indicate that the $H^2$ estimate also holds for more general cases.

Let us summarize the steps of the prediction-correction method at time $t_{n+1}$:
\begin{itemize}
  \item Solve the prediction step to obtain the defect solutions $u_0^{n+1}$, $w_0^{n+1}$ and $\lambda_0^{n+1}$.
  \item Modify the right-hand side of the prediction step using the solutions $u_0^{n+1}$, $w_0^{n+1}$ and $\lambda_0^{n+1}$, while keeping the left-hand side of the prediction step unchanged. 
  \item The modification of the right-hand sides is designed in such a way that the right-hand sides of the error equations of the correction step consists of the term $U_0^{n+1}-U_0^{n}$, $W_0^{n+1}-W_0^{n}$, $\Lambda_0^{n+1}-\Lambda_0^{n}$ and their gradients.
  \item Solve the new system with the modified right-hand side to obtain the correction solutions $u_1^{n+1}$, $w_1^{n+1}$ and $\lambda_1^{n+1}$.
\end{itemize}
Here, $\lambda_0$ denotes the Lagrange multiplier for the prediction step. The terms $U_0^n$, $W_0^n$ and $\Lambda_0^n$ are the corresponding error terms of the variables $u_0^n$, $w_0^n$ and $\lambda_0^n$ at time $t_n$. 
As mentioned above, it has been observed numerically that $U_0^{n+1}-U_0^{n}$, $W_0^{n+1}-W_0^{n}$, and $\Lambda_0^{n+1}-\Lambda_0^{n}$ converge with rate $\mathcal O((\Delta t)^2)$ in the $L^2$-norm in general. Moreover, we prove that the first two errors are second-order accurate in the accompanying paper \cite{report}.  We also prove that the $H^2$-norm of  $U_0^{n+1}-U_0^{n}$ is second-order accurate for a special case. Note that this implies the second-order convergence of the Lagrange multiplier in $L^2$-norm.  In Assumption~\ref{assump:U0H2} below, we assume the second-order accuracy of this error in order to carry out the analysis of the correction step. With this assumption we are able to prove second-order accuracy of the correction step. It should be noted that we perform the analysis for the time semi-discrete method. Extension to the fully discrete case are non-trivial and will be considered in the future.

The rest of the paper is organized as follows. In Section \ref{sec:ppinterface}, we introduce a parabolic-parabolic interface problem and the corresponding prediction-correction method. In Section \ref{sec:eadc}, we show that the correction method gives higher-order convergence in time under a crucial lemma when the interface is perpendicular to the two sides of $\Omega$. In Section \ref{sec:numerics}, we present two numerical examples that illustrate the theoretical findings.  In Section \ref{sec:ppinterfacediri}, we consider the interface problem with Dirichlet boundary conditions and discuss an inconsistency that arises and briefly describe how to remedy it. We conclude the paper by drawing some future directions in Section \ref{sec:future}.

\section{Prediction-correction method  for a parabolic\textbackslash parabolic interface problem}\label{sec:ppinterface}

Let $\Omega$ be a bounded domain that can be decomposed as $\Omega=\Omega_f \cup \Omega_s$ where the common interface $\Sigma=\partial \Omega_f \cap \partial \Omega_s$. The subset $\Gamma_D^f$ (resp., $\Gamma_D^s$) of the boundary $\partial\Omega_f$ (resp., $\partial\Omega_s$) is the bottom side (resp., top side) of the domain $\Omega_f$ (resp., $\Omega_s$). We then denote $\Gamma_N^f=\partial\Omega_f\setminus(\Gamma_D^f\cap\Sigma)$ and $\Gamma_N^s=\partial\Omega_s\setminus(\Gamma_D^s\cap\Sigma)$. See Figure \ref{figure:neumann} for an illustration. 

 We consider the following parabolic\textbackslash parabolic interface problem.
\begin{subequations}\label{eq:ppinterface}
\begin{alignat}{2}
\pt \uu-\nu_f \Delta \uu=&0, \quad && \text{ in } [0, T] \times \Omega_f, \nonumber\\
\uu(0, x)=& \uu_0(x), \quad  && \text{ on } \Omega_f, \label{eq:ppinterfaceu}\\
\uu=&0,  \quad && \text{ on }  [0, T] \times \Gamma^f_D,\nonumber\\
\frac{\partial\uu}{\partial n}=&0,  \quad && \text{ on }  [0, T] \times \Gamma^f_N\nonumber\\
\nonumber\\
\pt \ww-\nu_s \Delta \ww=&0, \quad && \text{ in } [0, T] \times \Omega_s, \nonumber\\
\ww(0, x)=& \ww_0(x), \quad  && \text{ on } \Omega_s, \label{eq:ppinterfacew}\\
\ww=&0,  \quad && \text{ on }  [0, T] \times \Gamma^s_D,\nonumber\\
\frac{\partial\ww}{\partial n}=&0,  \quad && \text{ on }  [0, T] \times \Gamma^s_N\nonumber\\
\nonumber\\
\ww - \uu =& 0, \quad && \text{ in } [0,T] \times \Sigma, \\
\nu_s \nabla \ww \cdot \bn_s + \nu_f \nabla \uu \cdot \bn_f =& 0, \quad && \text{ in } [0,T] \times \Sigma,
\end{alignat}  
\end{subequations}
where $\bn=\bn_f$ and $\bn_s$ are the outward facing normal vectors of $\Sigma$ for $\Omega_f$ and $\Omega_s$, respectively. Here $\nu_f$ and $\nu_s$ are piece-wise constant functions. We also assume that the initial data is smooth and that $\uu$ and $\ww$ is smooth on $\Omega_f$ and $\Omega_s$ respectively.   

Note that we consider Neumann boundary conditions on two sides of the domain instead of Dirichlet boundary conditions, this is different from our previous work \cite{burman2023loosely}. There are two main reasons we consider the problem \eqref{eq:ppinterface}. First, the parabolic\textbackslash parabolic interface problem \eqref{eq:ppinterface} is more applicable in real-world scenarios, as numerous fluid-structure interaction problems involve a specific section within a broader domain where the fluid flows from left to right. Consequently, imposing Dirichlet boundary conditions to two vertical sides of the domain is unsuitable. Secondly, the numerical method for the Dirichlet problem has some inconsistencies which presents some difficulties in the analysis. We discuss this in Section \ref{sec:ppinterfacediri}.

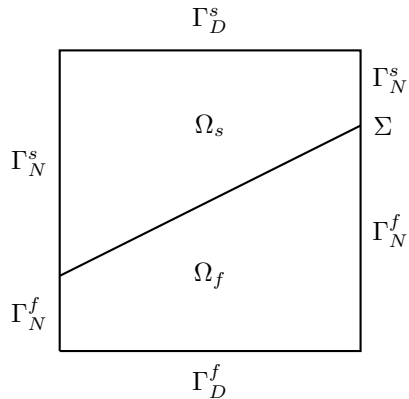
\begin{figure}[ht]
\centering
\begin{tikzpicture}
\draw[thick] plot coordinates {(0,0) (4,0) (4,4) (0,4) (0,0)};
\draw[thick] plot coordinates {(0,1) (4,3) };

 \node at (2,1) {$\Omega_f$};
 \node at (2,3) {$\Omega_s$};
 \node at (4.3,3) {$\Sigma$};
 \node at (2,-0.4) {$\Gamma^f_D$};
 \node at (2,4.4) {$\Gamma^s_D$};
 \node at (-0.4,0.5) {$\Gamma^f_N$};
 \node at (4.4,1.6) {$\Gamma^f_N$};
 \node at (-0.4,2.5) {$\Gamma^s_N$};
 \node at (4.4,3.6) {$\Gamma^s_N$};

\end{tikzpicture}
\caption{The domains $\Omega_f$ and $\Omega_s$ with interface $\Sigma$ and Neumann boundaries.} \label{figure:neumann}
\end{figure}

\subsection{Variational form}\label{sec:vf}
Let $(\cdot, \cdot)_i$ be the $L^2$-inner product on $\Omega_i$ for $i=f, s$. Moreover, let $\bl \cdot, \cdot \br$ be the $L^2$-inner product on $\Sigma$.  Consider the spaces
\begin{subequations}\label{eq:disspaces}
  \begin{alignat}{1}
 V_f:=&\{ v \in H^1(\Omega_f): v=0 \text{ on }  \Gamma^f_D \},  \\
 V_s:= &\{ v \in H^1(\Omega_s): v=0 \text{ on  } \Gamma^s_D \},  \\
V_g:= & L^2(\Sigma).
\end{alignat}
\end{subequations}
We define the discrete time derivative as
\begin{alignat*}{1}
\pdt v^{n+1}:=\frac{v^{n+1}-v^n}{\dt},
\end{alignat*}
and the discrete average in time
\begin{alignat*}{1}
v^{n+\frac12}:=\frac{v^{n+1}+v^n}{2}.
\end{alignat*}

Furthermore, define the time step $\Dt$ and an integer $N$, such that $T = N\Dt$, and  let $\uu^{n} = \uu(t_n, \cdot)$, where $t_n = n\Dt$ for $n \in \{0, 1, 2, \dots, N\}$. Define $\lla=\nu_f \nabla \uu \cdot \bn_f$ and assuming that $\lla\in L^2(\Sigma)$ we get that the solution to  \eqref{eq:ppinterface} solves, for all $t\in [0,T]$:

\begin{alignat}{2}
(\partial_t \uu, v)_f+\nu_f(\nabla \uu, \nabla v)_f- \bl \lla, v\br=&0, \quad &&\forall v \in V_f,\label{eq:uuexact}\\
(\partial_t \ww,q )_s+\nu_s (\nabla \ww, \nabla q)_s+ \bl \lla, q\br=&0 , \quad && \forall q \in V_s, \label{eq:wwexact}\\
\bl \uu-\ww, \mu \br=&0, \quad && \forall\mu \in V_g.
\end{alignat}

\subsection{Defect-correction method}
We propose the following defect-correction method. The method consists of two steps, namely, the prediction step and the correction step. The prediction step is a Robin-Robin coupling method \cite{burman2023loosely}. The correction step is designed by modifying the right-hand side of the prediction step while keeping the left-hand side of the prediction step unchanged as we discussed in the introduction.
\paragraph{\bf Prediction step}
Find $w_0^{n+1}  \in V_s$, $u_0^{n+1} \in V_f$, and $\lambda_0^{n+1} \in V_g $ such that for $n\geq 0$,
\begin{subequations}\label{defect}
\begin{alignat}{2}
(\pdt w_0^{n+1}, z)_s+\nu_s(\nabla w_0^{n+1}, \nabla z)_s + \alpha \bl  w_0^{n+1}-u_{0}^{n}, z \br + \bl \lambda_{0}^{n}, z\br=&0 , \quad && \forall z \in V_s, \label{defect1}\\
(\pdt u_0^{n+1}, v)_f+\nu_f(\nabla u_0^{n+1}, \nabla v)_f- \bl \lambda_0^{n+1}, v\br=&0, \quad &&\forall v \in V_f,  \label{defect2}\\
\bl (\lambda_0^{n+1}- \lambda_{0}^{n})+  \alpha (u_0^{n+1}-w_0^{n+1}) , \mu \br=& 0,   \quad &&\forall \mu \in V_g.  \label{defect3}
\end{alignat}
\end{subequations}

\paragraph{\bf Correction step}
Find $w_1^{n+1}  \in V_s$, $u_1^{n+1} \in V_f$, and $\lambda_1^{n+1} \in V_g $ such that for $n\geq 0$,
\begin{subequations}\label{eq:correction}
\begin{alignat}{2}
&(\pdt w_1^{n+1}, z)_s+\nu_s(\nabla w_1^{n+1}, \nabla z)_s + \alpha \bl  w_1^{n+1}-u_{1}^{n}, z \br + \bl \lambda_{1}^{n}, z\br \quad &&\label{correct1}\\
&=\nu_s(\nabla w_0^{n+1}, \nabla z)_s + \alpha \bl  w_0^{n+1}-w_{0}^{n}, z \br + \bl \lambda_{0}^{n}, z\br-\nu_s(\nabla w_0^{n+\frac12}, \nabla z)_s - \bl \lambda_{0}^{n+\frac12}, z\br, \quad &&  \forall z \in V_s,\nonumber \\
&(\pdt u_1^{n+1}, v)_f+\nu_f(\nabla u_1^{n+1}, \nabla v)_f- \bl \lambda_1^{n+1}, v\br \quad && \label{correct2}\\
&=\nu_f(\nabla u_0^{n+1}, \nabla v)_f- \bl \lambda_0^{n+1},v\br-\nu_f(\nabla u_0^{n+\frac12}, \nabla v)_f+ \bl \lambda_0^{n+\frac12}, v \br, \quad && \forall v \in V_f,  \nonumber\\
& \bl (\lambda_1^{n+1}-\lambda_{1}^{n})+ \alpha (u_1^{n+1}-w_1^{n+1}) , \mu \br = \bl \lambda_0^{n+1}-\lambda_{0}^{n}, \mu \br,  \quad &&  \forall \mu \in V_g.
\end{alignat}
\end{subequations}
In practice, the splitting method should be implemented sequentially. Assuming the information from the previous time-step is known, we solve the prediction step \eqref{defect} for $w_0^{n+1}$, $u_0^{n+1}$ and $\lambda_0^{n+1}$. We then compute the right-hand side of the correction step \eqref{eq:correction} using $w_0^{n+1}$, $u_0^{n+1}$ and $\lambda_0^{n+1}$ and solve for $w_1^{n+1}$, $u_1^{n+1}$ and $\lambda_1^{n+1}$ using \eqref{eq:correction}. Moreover, by design, each problem \eqref{defect}, \eqref{eq:correction}  can be split into two sub-problems.

\section{Estimates for the prediction step}\label{sec:predictsec}
The key to the analysis of the correction method is the error estimates of the time difference quantities of the prediction step  \eqref{defect}. The proof of these results are established in \cite{report}.  In this section, we summarize these estimates. 
We define the errors 
\begin{equation}\label{eq:errorsdef0}
  \begin{aligned}
    W_0^n&=\ww^n-w_0^n,\\
    U_0^n&=\uu^n-u_0^n,\\
    \Lambda_0^n&=\lla^n-\lambda_0^n.
  \end{aligned}
\end{equation}
In \cite[Corollary 4.2]{report}, we show the following estimates for the prediction step \eqref{defect}. 
\begin{equation}\label{eq:pdstepesti}
\begin{aligned}
    &\max_{0\le n\le N-1}\|W_0^{n+1}-W_0^n\|^2_{L^2(\Omega_s)}+\max_{0\le n\le N-1}\|U_0^{n+1}-U_0^n\|^2_{L^2(\Omega_f)}\\
    &+\dt\sum_{n=0}^{N-1}\nu_f\|\nabla(U_0^{n+1}-U_0^n)\|^2_{L^2(\Omega_f)}+\dt\sum_{n=0}^{N-1}\nu_s\|\nabla(W_0^{n+1}-W_0^n)\|^2_{L^2(\Omega_s)}\le C (\dt)^4 \mathcal{Y},
\end{aligned}
\end{equation}
Additionally, we obtain the following estimate in \cite[Corollary 4.2]{report},
  \begin{equation}\label{eq:sdesti}
    \dt \sum_{n=0}^{M-1}\nu_f  \|\frac{\nabla (U_0^{n+1}-2U_0^{n}+U_0^{n-1})}{(\dt)^2}\|_{L^2(\Omega_f)}^2\le C(\dt)^2\mathfrak{Y}.
  \end{equation}
  Here $\mathcal{Y}$ and $\mathfrak{Y}$ are defined in Appendix \ref{app:notations}.
  At last, the following relation is established in \cite{report} as well,
  \begin{equation}\label{eq:L0rela}
    \Lla_0^{n+1}=\nu_f\nabla U_0^{n+1}\cdot\bn_f \quad\text{on}\quad\Sigma.
  \end{equation}

We then make the following crucial assumption.
\begin{assumption}\label{assump:U0H2}
We have, 
  \begin{equation}\label{eq:U0H2}
  \max_{0\le n\le N-1}\|\nabla(U_0^{n+1}-U_0^n)\|^2_{L^2(\Omega_f)}+\dt\sum_{n=0}^{N-1}\|D^2(U_0^{n+1}-U_0^{n})\|^2_{L^2(\Omega_f)}\le C (\dt)^4\mathsf{Y},
  \end{equation}
  where $\mathsf{Y}$ contains the regularity of the solutions.
\end{assumption}

Due to Assumption \ref{assump:U0H2}, the following Corollary is valid.
\begin{corollary}\label{coro:lambdanew1}
We have, 
\begin{alignat}{2}
  \dt\sum_{n=0}^{N-1}\|\Lambda_0^{n+1}-\Lambda_0^n\|^2_{L^2(\Sigma)}&\le C(\dt)^4\mathsf{Y}\label{eq:lambdanew1}
\end{alignat}
for any $n=0,1,2,\ldots, N-1$.   
\end{corollary}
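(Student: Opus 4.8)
The plan is to read off the interface quantity $\Lambda_0^{n+1}-\Lambda_0^n$ as a normal trace of $U_0^{n+1}-U_0^n$ via the identity \eqref{eq:L0rela}, and then to control that trace using the interior $H^2$-estimate provided by Assumption~\ref{assump:U0H2} together with the gradient bound already available in \eqref{eq:pdstepesti}.

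First I would use \eqref{eq:L0rela} to write, on $\Sigma$,
\[
\Lambda_0^{n+1}-\Lambda_0^n=\nu_f\,\nabla\bigl(U_0^{n+1}-U_0^n\bigr)\cdot\bn_f,
\]
so that, since $\nu_f$ is piecewise constant and hence bounded on $\Omega_f$,
\[
\|\Lambda_0^{n+1}-\Lambda_0^n\|_{L^2(\Sigma)}^2\le C\,\bigl\|\nabla\bigl(U_0^{n+1}-U_0^n\bigr)\bigr\|_{L^2(\Sigma)}^2 .
\]
Next I would apply the standard trace inequality $\|\phi\|_{L^2(\Sigma)}\le C\|\phi\|_{H^1(\Omega_f)}$ componentwise to the vector field $\phi=\nabla(U_0^{n+1}-U_0^n)$. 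Assumption~\ref{assump:U0H2} ensures $U_0^{n+1}-U_0^n\in H^2(\Omega_f)$, hence $\phi\in H^1(\Omega_f)$, and therefore
\[
\bigl\|\nabla\bigl(U_0^{n+1}-U_0^n\bigr)\bigr\|_{L^2(\Sigma)}^2\le C\Bigl(\bigl\|\nabla\bigl(U_0^{n+1}-U_0^n\bigr)\bigr\|_{L^2(\Omega_f)}^2+\bigl\|D^2\bigl(U_0^{n+1}-U_0^n\bigr)\bigr\|_{L^2(\Omega_f)}^2\Bigr).
\]

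Finally I would multiply by $\dt$, sum over $n=0,\dots,N-1$, and conclude: the $D^2$ contribution is bounded directly by $C(\dt)^4\mathsf{Y}$ using the second term on the left-hand side of \eqref{eq:U0H2}, while the gradient contribution is bounded by $T\max_{n}\|\nabla(U_0^{n+1}-U_0^n)\|_{L^2(\Omega_f)}^2\le C(\dt)^4\mathsf{Y}$ using the first term of \eqref{eq:U0H2} (equivalently, one may invoke the gradient sum in \eqref{eq:pdstepesti}). This yields
\[
\dt\sum_{n=0}^{N-1}\|\Lambda_0^{n+1}-\Lambda_0^n\|_{L^2(\Sigma)}^2\le C(\dt)^4\mathsf{Y},
\]
as claimed (up to relabeling the constant $\mathsf{Y}$ so that it absorbs $\mathcal Y$ and $T$).

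I do not expect a genuine obstacle here: the entire difficulty has been absorbed into Assumption~\ref{assump:U0H2}. The only point deserving a word of care is the trace inequality for $\nabla(U_0^{n+1}-U_0^n)$, which requires the $H^2$-regularity of $U_0^{n+1}-U_0^n$ (supplied by the assumption) and enough regularity of $\Sigma$ for a uniform trace constant; in the perpendicular-interface setting under consideration, $\Sigma$ is a straight segment, so this is classical.
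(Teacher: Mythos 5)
Your proposal is correct and follows essentially the same route as the paper: the paper's proof likewise combines the relation \eqref{eq:L0rela}, a standard trace inequality applied to $\nabla(U_0^{n+1}-U_0^n)$, and the $H^2$-bound of Assumption~\ref{assump:U0H2}. You have merely written out the details that the paper leaves implicit, and your observation that the gradient contribution can be absorbed either via the first term of \eqref{eq:U0H2} or via \eqref{eq:pdstepesti} (with the constant $C$ absorbing $T$) is consistent with the stated bound.
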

\begin{proof}
  By a standard trace inequality, the relation \eqref{eq:L0rela} and \eqref{eq:U0H2}, the estimate \eqref{eq:lambdanew1} immediately follows.
\end{proof}

\begin{remark}\label{rem:u0h2}
   If we consider a simple setting where the interface is perpendicular to the two sides of the domain $\Omega$, as depicted in Figure \ref{figure:neumannhori}, we can prove Assumption \ref{assump:U0H2} with $\mathsf{Y}=Y+\mathfrak{Y}+\mathbb{Y}+\nu_f\|\pt^3\ww\|^2_{L^2((0,T),L^2(\Omega_s))}$. Here $Y$ and $\mathbb{Y}$ are defined in Appendix \ref{app:notations}. We refer to \cite{report} for the proof of Assumption \ref{assump:U0H2} in this special case. However, it is nontrivial to prove Assumption \ref{assump:U0H2} for a more general case. Nonetheless, numerical results in \cite{report} indicate that Assumption \ref{assump:U0H2} holds for general cases.
\end{remark}

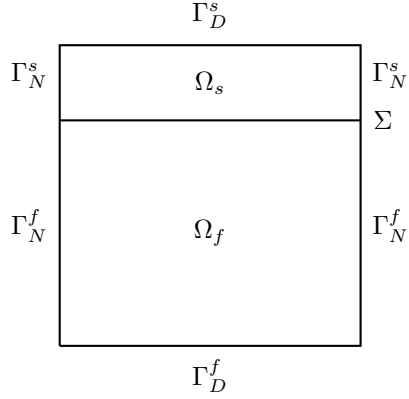
\begin{figure}[ht]
\centering
\begin{tikzpicture}
\draw[thick] plot coordinates {(0,0) (4,0) (4,4) (0,4) (0,0)};
\draw[thick] plot coordinates {(0,3) (4,3) };

 \node at (2,1.5) {$\Omega_f$};
 \node at (2,3.5) {$\Omega_s$};
 \node at (4.3,3) {$\Sigma$};
 \node at (2,-0.4) {$\Gamma^f_D$};
 \node at (2,4.4) {$\Gamma^s_D$};
 \node at (-0.4,1.6) {$\Gamma^f_N$};
 \node at (4.4,1.6) {$\Gamma^f_N$};
 \node at (-0.4,3.6) {$\Gamma^s_N$};
 \node at (4.4,3.6) {$\Gamma^s_N$};

\end{tikzpicture}
\caption{The domains $\Omega_f$ and $\Omega_s$ with horizontal interface $\Sigma$} \label{figure:neumannhori}
\end{figure}

\section{Error analysis for the correction step}\label{sec:eadc}

In this section we provide the error analysis of \eqref{eq:correction} under 
Assumption~\ref{assump:U0H2}.
As argued in \cite{burman2023loosely}, the prediction step is first-order accurate. In this section, we will prove that the correction method produces a second-order accurate method provided Assumption \ref{assump:U0H2} is satisfied.

\subsection{Error equations}
To obtain error equations, we write the exact solution in a way that can be compared to the numerical method. 

\begin{subequations}\label{eq:exact}
\begin{alignat}{2}
&(\pdt \ww^{n+1}, z)_s+\nu_s (\nabla \ww^{n+1}, \nabla z)_s + \alpha \bl  \ww^{n+1}-\uu^{n}, z \br + \bl \lla^{n}, z\br \quad &&\label{correct1}\\
&\quad=\nu_s(\nabla \ww^{n+1}, \nabla z)_s + \alpha \bl  \ww^{n+1}-\uu^{n}, z \br + \bl \lla^{n}, z\br\nonumber\\
&\quad\quad-\nu_s(\nabla \ww^{n+\frac12}, \nabla z)_s - \bl \lla^{n+\frac12}, z\br+ T_1(z), \quad &&  \forall z \in V_s, \nonumber\\
&(\pdt \uu^{n+1}, v)_f+\nu_f(\nabla \uu^{n+1}, \nabla v)_f- \bl \lla^{n+1}, v\br \quad && \label{correct2}\\
&\quad=\nu_f(\nabla \uu^{n+1}, \nabla v)_f- \bl \lla^{n+1},v\br-\nu_f(\nabla \uu^{n+\frac12}, \nabla v)_f+ \bl \lla^{n+\frac12}, v \br+ T_2(v), \quad && \forall v \in V_f,  \nonumber\\
& \bl (\lla^{n+1}-\lla^{n}) +\alpha(\uu^{n+1}-\ww^{n+1}) , \mu \br =  \bl \lla^{n+1}-\lla^{n}, \mu \br, \quad &&  \forall \mu \in V_g,
\end{alignat}
\end{subequations}
where
\begin{subequations}\label{eq:T12}
  \begin{alignat}{1}
T_1(z):= &(\pdt \ww^{n+1}, z)_s+\nu_s(\nabla \ww^{n+\frac12}, \nabla z)_s + \bl \lla^{n+\frac12}, z\br, \\
T_2(v):=& (\pdt \uu^{n+1}, v)_f+\nu_f(\nabla \uu^{n+\frac12}, \nabla v)_f - \bl \lla^{n+\frac12}, v\br.\label{eq:T2}
\end{alignat}
\end{subequations}
Now we define the errors 
\begin{equation}\label{eq:errorsdef1}
  \begin{aligned}
    W_1^n&=\ww^n-w_1^n,\\
    U_1^n&=\uu^n-u_1^n,\\
    \Lambda_1^n&=\lla^n-\lambda_1^n.
  \end{aligned}
\end{equation}
Subtract \eqref{eq:correction} and \eqref{eq:exact}, we obtain the error equations for $n\ge0$,
\begin{subequations}\label{error}
\begin{alignat}{2}
&(\pdt W_1^{n+1}, z)_s+\nu_s(\nabla W_1^{n+1}, \nabla z)_s + \alpha \bl  W_1^{n+1}-U_1^{n}, z \br + \bl \Lambda_1^{n}, z\br  =R_1(z) +T_1(z) \quad  &&  \forall z \in V_s, \label{error1}\\
&(\pdt U_1^{n+1}, v)_f+\nu_f(\nabla U_1^{n+1}, \nabla v)_f- \bl \Lambda_1^{n+1}, v\br= R_2(v)+T_2(v)
\quad && \forall v \in V_f,  \label{error2}\\
& \bl \Lambda_1^{n+1}-\Lambda_1^{n}+  \alpha(U_1^{n+1}-W_1^{n+1}), \mu \br =  \bl \Lambda_0^{n+1}-\Lambda_0^{n}, \mu \br  \quad &&  \forall\mu \in V_g.\label{error3}
\end{alignat}
\end{subequations}
Here
\begin{subequations}
  \begin{alignat}{1}
 R_1(z) :=&\frac{1}{2}\nu_s(\nabla (W_0^{n+1}-W_0^n), \nabla z)_s + \alpha \bl  (W_0^{n+1}-W_0^{n}), z \br - \frac{1}{2}\bl \Lambda_0^{n+1}-\Lambda_0^{n}, z\br,  \\
 R_2(v): =& 
\frac{1}{2} \nu_f(\nabla (U_0^{n+1}-U_0^n), \nabla v)_f - \frac{1}{2}\bl \Lambda_0^{n+1}-\Lambda_0^{n}, v\br.\label{eq:R2}
\end{alignat}
\end{subequations}
It follows from \eqref{error3} that,
\begin{equation}\label{errorstrong}
\Lambda_1^{n+1}-\Lambda_1^{n}+ \alpha ( U_1^{n+1}-W_1^{n+1})=   \Lambda_0^{n+1}-\Lambda_0^{n}\quad\text{on}\quad\Sigma.
\end{equation}
We use the convention $u_1^i=\uu^i=\uu_0$ and $w_1^i=\ww^i=\ww_0$ for $i<0$.

\subsection{Preliminary estimate for the correction step}

 In this section, we establish some preliminary estimates that will be used in the convergence analysis. The techniques we use here is similar but slightly different to those in \cite{report}. First, we define the following quantities for $n\ge0$,
\begin{alignat*}{1}
\Z_1^{n+1}:= & \frac{1}{2}\|W_1^{n+1}\|_{L^2(\Omega_s)}^2+ \frac{1}{2} \|U_1^{n+1}\|_{L^2(\Omega_f)}^2 +\frac{\dt \alpha}{2} \|U_1^{n+1}\|_{L^2(\Sigma)}^2 + \frac{\dt}{2\alpha} \|\Lambda_1^{n+1}\|_{L^2(\Sigma)}^2, \\ 
\Ss_1^{n+1}:= & \dt(\nu_f  \|\nabla U_1^{n+1}\|_{L^2(\Omega_f)}^2+  \nu_s \|\nabla W_1^{n+1}\|_{L^2(\Omega_s)}^2) +  \frac{1}{2} (\|W_1^{n+1}-W_1^n\|_{L^2(\Omega_s)}^2 + \|U_1^{n+1}-U_1^n\|_{L^2(\Omega_f)}^2)\\
&+ \frac{\alpha \dt}{2}\|(U_1^n-U_1^{n+1})+ \frac{1}{\alpha}(\Lambda_1^n-\Lambda_1^{n+1}) \|_{L^2(\Sigma)}^2.
\end{alignat*}

 We will use the following identity where $(\cdot,\cdot)$ is an inner product and $\|\cdot\|$ is the corresponding norm: 
\begin{equation}\label{eq:innerid}
  (\varphi-\psi,\vartheta)=\frac12(\|\varphi\|^2-\|\psi\|^2+\|\vartheta-\psi\|^2-\|\vartheta-\varphi\|^2).
\end{equation}
 We start with the following lemma.

\begin{lemma}\label{errorLemma1}It holds, 
\begin{alignat}{1}\label{eq:erroreq}
\Z_1^{n+1}+\Ss_1^{n+1}=\Z_1^n + \dt F_1^{n+1}  +\dt R^{n+1}+\dt T^{n+1}+\frac{\dt}{\alpha}\bl \Lambda_1^{n+1}, \Lambda_0^{n+1}-\Lambda_0^{n} \br,
\end{alignat}
where
\begin{alignat*}{1}
F_1^{n+1}:=& - \bl   W_1^{n+1}, \Lambda_0^{n}-\Lambda_0^{n+1} \br +\bl U_1^n-U_1^{n+1}, \Lambda_0^n-\Lambda_0^{n+1} \br,\\
R^{n+1}:=& R_1(W_1^{n+1})+R_2(U_1^{n+1}), \\
T^{n+1}:=& T_1(W_1^{n+1})+T_2(U_1^{n+1}).
\end{alignat*}
\end{lemma}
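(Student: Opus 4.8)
The plan is to derive the energy identity \eqref{eq:erroreq} by testing the three error equations in \eqref{error} with the natural choices and combining. Concretely, I would take $z = W_1^{n+1}$ in \eqref{error1}, $v = U_1^{n+1}$ in \eqref{error2}, and then handle the interface terms using \eqref{error3}, which in its strong form is \eqref{errorstrong}. Adding the first two equations gives the volume terms $(\pdt W_1^{n+1},W_1^{n+1})_s + (\pdt U_1^{n+1},U_1^{n+1})_f + \nu_s\|\nabla W_1^{n+1}\|^2 + \nu_f\|\nabla U_1^{n+1}\|^2$ on the left, together with the interface contributions $\alpha\bl W_1^{n+1}-U_1^n, W_1^{n+1}\br + \bl \Lambda_1^n, W_1^{n+1}\br - \bl \Lambda_1^{n+1}, U_1^{n+1}\br$, and the right-hand side $R_1(W_1^{n+1}) + R_2(U_1^{n+1}) + T_1(W_1^{n+1}) + T_2(U_1^{n+1}) = R^{n+1} + T^{n+1}$, which already accounts for the $\dt R^{n+1}+\dt T^{n+1}$ terms after multiplying through by $\dt$.

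For the time-derivative terms I would use the polarization identity $(\pdt v^{n+1}, v^{n+1}) = \frac{1}{2\dt}(\|v^{n+1}\|^2 - \|v^n\|^2 + \|v^{n+1}-v^n\|^2)$, which produces the $\frac12\|W_1^{n+1}\|^2$, $\frac12\|U_1^{n+1}\|^2$ pieces of $\Z_1^{n+1}$ and the $\frac12\|W_1^{n+1}-W_1^n\|^2$, $\frac12\|U_1^{n+1}-U_1^n\|^2$ pieces of $\Ss_1^{n+1}$. The real work is the algebraic manipulation of the interface terms. Writing $\bl W_1^{n+1}, \Lambda_1^n\br - \bl U_1^{n+1}, \Lambda_1^{n+1}\br$ and $\alpha\bl W_1^{n+1}-U_1^n, W_1^{n+1}\br$, I would substitute $W_1^{n+1} = U_1^{n+1} - \frac1\alpha(\Lambda_1^{n+1}-\Lambda_1^n) + \frac1\alpha(\Lambda_0^{n+1}-\Lambda_0^n)$ coming from \eqref{errorstrong} in the places needed to telescope, exactly as in the corresponding estimate in \cite{report} and in \cite{burman2023loosely,burman-durst-guzman-fernandez}. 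This is designed to generate the $\frac{\dt\alpha}{2}\|U_1^{n+1}\|^2_{L^2(\Sigma)}$ and $\frac{\dt}{2\alpha}\|\Lambda_1^{n+1}\|^2_{L^2(\Sigma)}$ terms in $\Z_1^{n+1}$, the $\frac{\alpha\dt}{2}\|(U_1^n-U_1^{n+1}) + \frac1\alpha(\Lambda_1^n-\Lambda_1^{n+1})\|^2_{L^2(\Sigma)}$ term in $\Ss_1^{n+1}$, and the leftover cross terms involving $\Lambda_0^{n+1}-\Lambda_0^n$, namely $\dt F_1^{n+1}$ together with $\frac{\dt}{\alpha}\bl \Lambda_1^{n+1}, \Lambda_0^{n+1}-\Lambda_0^n\br$. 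Here the identity \eqref{eq:innerid} is the tool that converts each inner product $(\varphi-\psi,\vartheta)$ into the sum/difference of four squared norms, which is precisely how the $\Sigma$-norm terms split between $\Z_1$ and $\Ss_1$.

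The main obstacle is bookkeeping: one must carefully track every term arising from the substitution of \eqref{errorstrong} into the interface inner products so that (i) all negative-definite terms are absorbed into $\Ss_1^{n+1}$ with the correct signs, (ii) the $\dt$-weighted $L^2(\Sigma)$ boundary energies land in $\Z_1^{n+1}$, and (iii) the residual terms coupling to the prediction-step error differences $\Lambda_0^{n+1}-\Lambda_0^n$ assemble exactly into $F_1^{n+1}$ plus the trailing term $\frac{\dt}{\alpha}\bl \Lambda_1^{n+1},\Lambda_0^{n+1}-\Lambda_0^n\br$. The only genuine subtlety beyond \cite{report} is that the right-hand side of \eqref{error3} is $\bl \Lambda_0^{n+1}-\Lambda_0^n,\mu\br$ rather than zero, so every step of the telescoping in the Robin coupling carries this extra inhomogeneity; I would simply carry it along symbolically and collect it at the end. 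I expect no new idea is needed — it is the same energy argument as for the prediction step, with the forcing $R^{n+1}+T^{n+1}$ and the inhomogeneous multiplier equation tracked explicitly — so the proof is a direct, if somewhat lengthy, computation.
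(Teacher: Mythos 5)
Your proposal follows the paper's proof essentially verbatim: test \eqref{error1} and \eqref{error2} with $\dt W_1^{n+1}$ and $\dt U_1^{n+1}$, apply \eqref{eq:innerid} to the time-difference terms, and use \eqref{errorstrong} to reorganize the interface contributions into the boundary energies of $\Z_1^{n+1}$, the dissipation term of $\Ss_1^{n+1}$, and the residuals $F_1^{n+1}$ and $\frac{\dt}{\alpha}\bl \Lambda_1^{n+1},\Lambda_0^{n+1}-\Lambda_0^{n}\br$, exactly as the paper does via $\J^{n+1}$ and $\mathbb{J}^{n+1}$. The only slip is that your displayed substitution has both correction terms with reversed signs: \eqref{errorstrong} gives $W_1^{n+1}=U_1^{n+1}+\frac1\alpha(\Lambda_1^{n+1}-\Lambda_1^{n})-\frac1\alpha(\Lambda_0^{n+1}-\Lambda_0^{n})$, a detail the full computation would force you to correct.
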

\begin{proof}
To begin, we set $z= \dt W_1^{n+1}$ in \eqref{error1} and $v = \dt U_1^{n+1}$ in \eqref{error2} and we obtain, by \eqref{eq:innerid},
\begin{alignat}{1}
& \frac{1}{2}\|W_1^{n+1}\|_{L^2(\Omega_s)}^2+ \frac{1}{2} \|U_1^{n+1}\|_{L^2(\Omega_f)}^2 +  \frac{1}{2}\|W_1^{n+1}-W_1^n\|_{L^2(\Omega_s)}^2+ \frac{1}{2} \|U_1^{n+1}-U_1^n\|_{L^2(\Omega_f)}^2 \nonumber  \\
 & + \nu_s\dt \|\nabla W_1^{n+1}\|_{L^2(\Omega_s)}^2+  \nu_f\dt \|\nabla U_1^{n+1}\|_{L^2(\Omega_f)}^2 \nonumber \\
  & =  \frac{1}{2}\|W_1^{n}\|_{L^2(\Omega_s)}^2+ \frac{1}{2} \|U_1^{n}\|_{L^2(\Omega_f)}^2+\dt (\J^{n+1}+R^{n+1}+T^{n+1}), \label{erroraux213}
\end{alignat}
where
\begin{alignat}{1}\label{eq:J1}
\J^{n+1}:=& - \alpha \bl W_1^{n+1} - U_1^n, W_1^{n+1} \br- \bl  \Lambda_1^n, W_1^{n+1}\br+ \bl \Lambda_1^{n+1}, U_1^{n+1}\br.
\end{alignat}

After some manipulations and use \eqref{errorstrong}, we have
\begin{alignat*}{1}
\J^{n+1}= \mathbb{J}^{n+1}  - \bl   W_1^{n+1}, \Lambda_0^{n}-\Lambda_0^{n+1} \br - \frac{1}{\alpha}\bl \Lambda_1^{n+1}, \Lambda_0^{n}- \Lambda_0^{n+1} \br  +\bl U_1^n-U_1^{n+1}, \Lambda_0^n-\Lambda_0^{n+1} \br,
\end{alignat*}
where 
\begin{equation*}
 \mathbb{J}^{n+1} := \alpha \bl U_1^n-U_1^{n+1}, U_1^{n+1} \br+ \frac{1}{\alpha} \bl  \Lambda_1^n-\Lambda_1^{n+1}, \Lambda_1^{n+1} \br- \bl U_1^n-U_1^{n+1}, \Lambda_1^n-\Lambda_1^{n+1} \br.
\end{equation*}

One can easily show that 
\begin{alignat}{1}\label{eq:JJ1}
 \mathbb{J}^{n+1}= & \frac{\alpha}{2}(\|U_1^n\|_{L^2(\Sigma)}^2 - \|U_1^{n+1}\|_{L^2(\Sigma)}^2)  + \frac{1}{2\alpha}(\|\Lambda_1^n\|_{L^2(\Sigma)}^2 - \|\Lambda_1^{n+1}\|_{L^2(\Sigma)}^2) \\
&- \frac{\alpha}{2}\|(U_1^n-U_1^{n+1})+ \frac{1}{\alpha}(\Lambda_1^n-\Lambda_1^{n+1}) \|_{L^2(\Sigma)}^2\nonumber.
\end{alignat}

Combining \eqref{eq:JJ1}, \eqref{eq:J1} and \eqref{erroraux213} we arrive at the identity \eqref{eq:erroreq}.
\end{proof}

In the following subsections, we give preliminary estimates of all the terms on the right-hand side of \eqref{eq:erroreq}. To this purpose, we will need the following trace inequality, which follows from the continuity of the trace operator and from Poincar\'e's inequality. 
\begin{proposition}
There exists constants $C_{\text{tr}}$ such that 


\begin{equation}\label{trace}
\|v\|_{L^2(\Sigma)} \le C_{\text{tr}} \|\nabla v\|_{L^2(\Omega_i)} \quad \forall v \in V_i\quad i=s,f.
\end{equation}
\end{proposition}

\subsection{Estimate for $F_1^{n+1}$}

\begin{lemma}\label{Festimate}
Let $1 \le M \le N$, then
\begin{equation}
 \dt \sum_{n=0}^{M-1} F_1^{n+1} \le  \frac{1}{8} \sum_{n=0}^{M-1} \Ss_1^{n+1}+ C D_1(M),
\end{equation}
where
\begin{alignat*}{1}
D_1(M):=\dt\sum_{n=0}^{M-1}\bigg(\frac{C_\text{tr}^2}{\nu_f}+ \frac{C_\text{tr}^2}{\nu_s}\bigg)\|\Lambda_0^{n}-\Lambda_0^{n+1}\|^2_{L^2(\Sigma)}.
\end{alignat*}
\end{lemma}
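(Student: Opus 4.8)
The plan is to estimate the two bilinear pairings that make up $F_1^{n+1}$ directly, using the trace inequality \eqref{trace} to move the $L^2(\Sigma)$ factors of $W_1^{n+1}$ and $U_1^n-U_1^{n+1}$ onto gradient norms, and then Young's inequality to absorb those gradient/difference norms into $\sum \Ss_1^{n+1}$ while dumping the $\Lambda_0^{n}-\Lambda_0^{n+1}$ factors into $D_1(M)$. Concretely, recall
\[
F_1^{n+1}= - \bl W_1^{n+1}, \Lambda_0^{n}-\Lambda_0^{n+1}\br +\bl U_1^n-U_1^{n+1}, \Lambda_0^n-\Lambda_0^{n+1}\br .
\]
For the first term, by Cauchy--Schwarz on $\Sigma$, the trace inequality $\|W_1^{n+1}\|_{L^2(\Sigma)}\le C_{\text{tr}}\|\nabla W_1^{n+1}\|_{L^2(\Omega_s)}$, and Young's inequality with a small parameter, I get
\[
-\bl W_1^{n+1},\Lambda_0^n-\Lambda_0^{n+1}\br
\le \delta \nu_s\|\nabla W_1^{n+1}\|_{L^2(\Omega_s)}^2
+ \frac{C_{\text{tr}}^2}{4\delta\nu_s}\|\Lambda_0^n-\Lambda_0^{n+1}\|_{L^2(\Sigma)}^2 .
\]
For the second term there are two natural routes: either bound $\|U_1^n-U_1^{n+1}\|_{L^2(\Sigma)}$ using the trace inequality applied to $U_1^n-U_1^{n+1}\in V_f$, giving a $\|\nabla(U_1^n-U_1^{n+1})\|_{L^2(\Omega_f)}^2$ term that is controlled by $\dt\nu_f\|\nabla U_1^{n+1}\|^2+\dt\nu_f\|\nabla U_1^n\|^2$ in $\Ss_1$, or — more in the spirit of $\Ss_1^{n+1}$, which contains $\tfrac{\alpha\dt}{2}\|(U_1^n-U_1^{n+1})+\tfrac1\alpha(\Lambda_1^n-\Lambda_1^{n+1})\|_{L^2(\Sigma)}^2$ — complete the square so that $\bl U_1^n-U_1^{n+1},\Lambda_0^n-\Lambda_0^{n+1}\br$ is matched against that combined quantity. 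I would try the trace-inequality route first since it keeps the bookkeeping transparent: after Young's inequality it contributes $\delta'\nu_f(\|\nabla U_1^{n+1}\|_{L^2(\Omega_f)}^2+\|\nabla U_1^n\|_{L^2(\Omega_f)}^2)+\tfrac{C_{\text{tr}}^2}{c\,\delta'\nu_f}\|\Lambda_0^n-\Lambda_0^{n+1}\|_{L^2(\Sigma)}^2$.

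Then I sum over $n=0,\dots,M-1$, multiply by $\dt$, and choose the Young parameters $\delta,\delta'$ small enough (e.g. so that all the accumulated $\nu_s\|\nabla W_1^{n+1}\|^2$ and $\nu_f\|\nabla U_1^{n+1}\|^2$ coefficients sum to at most $\tfrac{1}{8}$ relative to the $\dt\nu_f\|\nabla U_1^{n+1}\|^2_{L^2(\Omega_f)}+\dt\nu_s\|\nabla W_1^{n+1}\|^2_{L^2(\Omega_s)}$ appearing in $\Ss_1^{n+1}$). One mild technicality is the index shift: a term like $\dt\sum_{n=0}^{M-1}\|\nabla U_1^n\|_{L^2(\Omega_f)}^2$ is $\dt\sum_{n=-1}^{M-2}\|\nabla U_1^{n+1}\|_{L^2(\Omega_f)}^2$, which is bounded by $\dt\sum_{n=0}^{M-1}\|\nabla U_1^{n+1}\|_{L^2(\Omega_f)}^2$ plus the $n=-1$ contribution, and the latter vanishes by the convention $u_1^i=\uu^i$ for $i<0$ (so $U_1^{0}-U_1^{-1}=0$, or more simply $\nabla U_1^{0}=0$). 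Collecting the $\Lambda_0$ terms gives exactly $D_1(M)=\dt\sum_{n=0}^{M-1}\big(\tfrac{C_{\text{tr}}^2}{\nu_f}+\tfrac{C_{\text{tr}}^2}{\nu_s}\big)\|\Lambda_0^n-\Lambda_0^{n+1}\|_{L^2(\Sigma)}^2$ up to the absolute constant $C$ absorbing $\delta,\delta'$ and the factor $c$, which matches the claimed form.

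The main obstacle, such as it is, is not analytical depth but careful constant tracking: making sure the gradient contributions from \emph{both} terms of $F_1^{n+1}$, summed over $n$ (including the index-shifted $U_1^n$ piece), fit inside the single $\tfrac18\sum\Ss_1^{n+1}$ budget — which is tight because later lemmas (for $R^{n+1}$, $T^{n+1}$, and the $\Lambda_1$ pairing in \eqref{eq:erroreq}) will each claim their own slice of $\sum\Ss_1^{n+1}$ before a discrete Grönwall argument closes the estimate. So I would keep $\delta,\delta'$ explicitly small (e.g. $\le \tfrac1{32}$) and verify at the end that the sum of all gradient coefficients is $\le\tfrac18$. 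The combined-square alternative for the second term would be the fallback if the straightforward route spent too much of the $\Ss_1$ budget, but I do not expect that to be necessary.
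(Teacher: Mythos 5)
Your proposal is correct and follows essentially the same route as the paper: Cauchy--Schwarz on $\Sigma$, the trace inequality \eqref{trace} applied to $W_1^{n+1}$ and to $U_1^n-U_1^{n+1}$, and Young's inequality with small parameters so that the resulting gradient terms (after the index shift, which is harmless since $U_1^0=0$) fit inside $\frac{1}{16}\sum\Ss_1^{n+1}+\frac{1}{16}\sum\Ss_1^{n+1}=\frac18\sum\Ss_1^{n+1}$, with the $\Lambda_0$ factors collected into $D_1(M)$. The paper's proof is just a terser version of exactly this computation, using the trace-inequality route (not the completed-square alternative) for the second pairing.
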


\begin{proof}
 It follows from \eqref{trace} and the definition of $\Ss_1^{n+1}$ that,
  \begin{multline*}
    \dt\sum_{n=0}^{M-1} \left(- \bl   W_1^{n+1}, \Lambda_0^{n}-\Lambda_0^{n+1} \br +\bl U_1^n-U_1^{n+1}, \Lambda_0^n-\Lambda_0^{n+1} \br\right)\\
    \le  \frac{\dt}{16}\sum_{n=0}^{M-1}\nu_s\|\nabla W_1^{n+1}\|^2_{L^2(\Omega_s)}+\frac{\dt}{16}\sum_{n=0}^{M-1}\nu_f\|\nabla U_1^{n+1}\|^2_{L^2(\Omega_f)}\\
     +C\dt\sum_{n=0}^{M-1}\bigg(\frac{C_\text{tr}^2}{\nu_f}+ \frac{C_\text{tr}^2}{\nu_s}\bigg)\|\Lambda_0^{n}-\Lambda_0^{n+1}\|^2_{L^2(\Sigma)}
    \le\frac{1}{8} \sum_{n=0}^{M-1} \Ss_1^{n+1}+ C D_1(M),
  \end{multline*}
  which completes the proof. 
\end{proof}

\subsection{Estimate for $R^{n+1}$}

\begin{lemma}\label{Restimate}
 Let $1 \le M \le N$, then
\begin{equation}\label{eq:Restimate}
 \dt \sum_{n=0}^{M-1} R^{n+1} \le  \frac{1}{8} \sum_{n=0}^{M-1} \Ss_1^{n+1}+ C D_2(M),
\end{equation}
where
\begin{alignat*}{1}
D_2(M):&=\dt\sum_{n=0}^{M-1}\bigg(\frac{C_\text{tr}^2}{\nu_f}+ \frac{C_\text{tr}^2}{\nu_s}\bigg)\|\Lambda_0^{n}-\Lambda_0^{n+1}\|^2_{L^2(\Sigma)}+\dt\sum_{n=0}^{M-1}\nu_s\|\nabla(W_0^{n+1}-W_0^n)\|^2_{L^2(\Omega_s)}\\
&\quad+\dt\sum_{n=0}^{M-1}\frac{C^2_{tr}}{\nu_s}\alpha^2\|W_0^{n+1}-W_0^n\|^2_{L^2(\Sigma)}+\dt\sum_{n=0}^{M-1}\nu_f\|\nabla(U_0^{n+1}-U_0^n)\|^2_{L^2(\Omega_f)}.
\end{alignat*}
\end{lemma}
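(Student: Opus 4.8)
The plan is to bound $\dt\sum_{n=0}^{M-1} R^{n+1} = \dt\sum_{n=0}^{M-1}\bigl(R_1(W_1^{n+1})+R_2(U_1^{n+1})\bigr)$ term-by-term, absorbing all factors involving the correction-step unknowns $W_1^{n+1},U_1^{n+1}$ into $\tfrac18\sum \Ss_1^{n+1}$ and collecting everything involving only the prediction-step time differences $W_0^{n+1}-W_0^n$, $U_0^{n+1}-U_0^n$, $\Lambda_0^{n+1}-\Lambda_0^n$ into $C D_2(M)$. Recall from the definitions that $R_1(z)=\tfrac12\nu_s(\nabla(W_0^{n+1}-W_0^n),\nabla z)_s+\alpha\bl W_0^{n+1}-W_0^n,z\br-\tfrac12\bl\Lambda_0^{n+1}-\Lambda_0^n,z\br$ and $R_2(v)=\tfrac12\nu_f(\nabla(U_0^{n+1}-U_0^n),\nabla v)_f-\tfrac12\bl\Lambda_0^{n+1}-\Lambda_0^n,v\br$, and that $\Ss_1^{n+1}$ controls $\dt\nu_f\|\nabla U_1^{n+1}\|_{L^2(\Omega_f)}^2+\dt\nu_s\|\nabla W_1^{n+1}\|_{L^2(\Omega_s)}^2$.

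First I would handle the two volume gradient terms: by Cauchy--Schwarz and a weighted Young's inequality,
\[
\dt\sum_{n=0}^{M-1}\tfrac12\nu_s(\nabla(W_0^{n+1}-W_0^n),\nabla W_1^{n+1})_s \le \dt\sum_{n=0}^{M-1}\Bigl(\tfrac{\dt}{C_0}\nu_s\|\nabla W_1^{n+1}\|^2_{L^2(\Omega_s)}+C\nu_s\|\nabla(W_0^{n+1}-W_0^n)\|^2_{L^2(\Omega_s)}\Bigr),
\]
choosing the constant $C_0$ large enough that the first part is below the $\Ss_1$ budget, and likewise for the $U$-term using $\nabla U_1^{n+1}$; these feed the second and fourth sums in $D_2(M)$. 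Next, the interface terms $-\tfrac12\bl\Lambda_0^{n+1}-\Lambda_0^n, W_1^{n+1}\br$ and $-\tfrac12\bl\Lambda_0^{n+1}-\Lambda_0^n, U_1^{n+1}\br$: apply the trace inequality \eqref{trace} to $W_1^{n+1}\in V_s$ and $U_1^{n+1}\in V_f$ to turn the $L^2(\Sigma)$ norms of $W_1^{n+1},U_1^{n+1}$ into $C_{\text{tr}}\|\nabla W_1^{n+1}\|_{L^2(\Omega_s)}$ and $C_{\text{tr}}\|\nabla U_1^{n+1}\|_{L^2(\Omega_f)}$, then Young's inequality again puts the gradient parts into $\Ss_1^{n+1}$ and leaves $\dt\sum(C_{\text{tr}}^2/\nu_s + C_{\text{tr}}^2/\nu_f)\|\Lambda_0^{n+1}-\Lambda_0^n\|^2_{L^2(\Sigma)}$, i.e. the first sum in $D_2(M)$. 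Finally the term $\alpha\bl W_0^{n+1}-W_0^n, W_1^{n+1}\br$ is treated the same way, using \eqref{trace} on $W_1^{n+1}$, which yields the third sum $\dt\sum (C_{\text{tr}}^2/\nu_s)\alpha^2\|W_0^{n+1}-W_0^n\|^2_{L^2(\Sigma)}$. Summing the four estimates, all the correction-unknown contributions add up to at most $\tfrac18\sum_{n=0}^{M-1}\Ss_1^{n+1}$ (this forces the Young parameters), and the remainder is exactly $CD_2(M)$.

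The argument is essentially routine, a careful bookkeeping of Cauchy--Schwarz plus Young applications; the only thing requiring attention is tracking the numerical constants so that the accumulated coefficient in front of $\sum\Ss_1^{n+1}$ does not exceed $\tfrac18$ — each of the (roughly five) scalar products must be allotted a small enough share of that budget, which is possible since $\Ss_1^{n+1}$ carries the full $\dt\nu_f\|\nabla U_1^{n+1}\|^2+\dt\nu_s\|\nabla W_1^{n+1}\|^2$ with coefficient one. I expect the main (though still modest) obstacle to be making sure the $\alpha$-dependence is clean: the term $\alpha\bl W_0^{n+1}-W_0^n,W_1^{n+1}\br$ only sees $\|\nabla W_1^{n+1}\|$ through the trace constant, so the $\alpha^2$ lands harmlessly in $D_2(M)$ rather than competing with the $\Ss_1$ terms, and one should note that no use of \eqref{errorstrong} or the $\dt\alpha\|U_1^{n+1}\|^2_{L^2(\Sigma)}$ part of $\Z_1^{n+1}$ is needed here. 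With Corollary~\ref{coro:lambdanew1} and the prediction-step estimates \eqref{eq:pdstepesti}, one sees a posteriori that $D_2(M)\le C(\dt)^4(\cdots)$, though that final quantitative step presumably belongs to a later lemma rather than this proof.
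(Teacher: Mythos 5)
Your argument is correct and is essentially identical to the paper's proof: term-by-term Cauchy--Schwarz and Young's inequality, with the trace inequality \eqref{trace} converting the interface factors $W_1^{n+1},U_1^{n+1}$ into gradient norms that are absorbed into $\tfrac18\sum_{n}\Ss_1^{n+1}$, leaving precisely the four sums constituting $D_2(M)$. One small slip: the Young parameter in your first display should be a fixed constant $\tfrac{1}{C_0}$ rather than $\tfrac{\dt}{C_0}$ (as your surrounding text already intends), since otherwise the complementary term would carry a factor $1/\dt$.
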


\begin{proof}
  From \eqref{trace} and the definition of $\Ss_1^{n+1}$, it follows that 
  \begin{equation*}
  \begin{aligned}
    &\dt\sum_{n=0}^{M-1}R_1(W_1^{n+1})+R_2(U^{n+1})\\
    =&\dt\sum_{n=0}^{M-1} \frac{1}{2}\nu_s(\nabla (W_0^{n+1}-W_0^n), \nabla W_1^{n+1})_s + \alpha \bl  (W_0^{n+1}-W_0^{n}), W_1^{n+1} \br- \frac{1}{2}\bl \Lambda_0^{n+1}-\Lambda_0^{n}, W_1^{n+1}\br \\
    &+\dt\sum_{n=0}^{M-1}\frac{1}{2} \nu_f(\nabla (U_0^{n+1}-U_0^n), \nabla U_1^{n+1})_f - \frac{1}{2}\bl \Lambda_0^{n+1}-\Lambda_0^{n}, U_1^{n+1}\br\\
    \le&  \frac{\dt}{8}\sum_{n=0}^{M-1}\nu_s\|\nabla W_1^{n+1}\|^2_{L^2(\Omega_s)}+C\dt\sum_{n=0}^{M-1}\nu_s\|\nabla(W_0^{n+1}-W_0^n)\|^2_{L^2(\Omega_s)}\\
    &
    +C\dt\frac{C^2_{tr}}{\nu_s}\alpha^2\sum_{n=0}^{M-1}\|W_0^{n+1}-W_0^n\|^2_{L^2(\Sigma)}
    +C\dt\frac{C^2_{tr}}{\nu_s}\sum_{n=0}^{M-1}\|\Lambda_0^{n+1}-\Lambda_0^n\|^2_{L^2(\Sigma)}\\
    &+\frac{\dt}{8}\sum_{n=0}^{M-1}\nu_f\|\nabla U_1^{n+1}\|^2_{L^2(\Omega_f)}+C\dt\sum_{n=0}^{M-1}\nu_f\|\nabla(U_0^{n+1}-U_0^n)\|^2_{L^2(\Omega_f)}\\
    &
    +C\dt\frac{C^2_{tr}}{\nu_f}\sum_{n=0}^{M-1}\|\Lambda_0^{n+1}-\Lambda_0^n\|^2_{L^2(\Sigma)}\le\frac{1}{8} \sum_{n=0}^{M-1} \Ss_1^{n+1}+ C D_2(M),
  \end{aligned}  
  \end{equation*}
  which yields \eqref{eq:Restimate} and completes the proof. 
\end{proof}

\subsection{Estimate for $T^{n+1}$}

\begin{lemma}\label{Testimate}
Let $1 \le M \le N$, then
\begin{equation}\label{eq:Testimate}
 \dt \sum_{n=0}^{M-1} T^{n+1} \le  \frac{1}{4} \max_{1\le n \le M}  \Z_1^n +\frac18 \sum_{n=0}^{M-1} \Ss_1^{n+1}+ C D_3(M),
\end{equation}
where
\begin{alignat*}{1}
D_3(M):&=\dt T\sum_{n=0}^{M-1} \bigg(\|\pdt \ww^{n+1}-\pt\ww(t_{n+\frac12})\|^2_{L^2(\Omega_s)}+\|\pdt \uu^{n+1}-\pt\uu(t_{n+\frac12})\|^2_{L^2(\Omega_f)}\bigg)\\
&\quad+\dt \sum_{n=0}^{M-1}\bigg(\nu_s\|\nabla \ww^{n+\frac12}-\nabla \ww(t_{n+\frac12})\|^2_{L^2(\Omega_s)}+\nu_f\|\nabla \uu^{n+\frac12}-\nabla \uu(t_{n+\frac12})\|^2_{L^2(\Omega_f)}\bigg)\\
&\quad+\dt \sum_{n=0}^{M-1}\bigg(\bigg(\frac{C_\text{tr}^2}{\nu_f}+ \frac{C_\text{tr}^2}{\nu_s}\bigg)\|\lla^{n+\frac12}-\lla(t_{n+\frac12})\|^2_{L^2(\Sigma)}\bigg).
\end{alignat*}
\end{lemma}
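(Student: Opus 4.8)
The goal is to bound $\dt \sum_{n=0}^{M-1} T^{n+1}$, where $T^{n+1} = T_1(W_1^{n+1}) + T_2(U_1^{n+1})$ and $T_1, T_2$ are the truncation-error functionals defined in \eqref{eq:T12}. The plan is to recognize that, because the exact solution satisfies the time-continuous weak form \eqref{eq:uuexact}--\eqref{eq:wwexact} pointwise in $t$, each of $T_1$ and $T_2$ measures only the \emph{consistency error of the Crank--Nicolson-type quadrature} used in the correction step: replacing $\pt\ww(t_{n+\frac12})$ by $\pdt\ww^{n+1}$, replacing $\nabla\ww(t_{n+\frac12})$ by $\nabla\ww^{n+\frac12}$, and replacing $\lla(t_{n+\frac12})$ by $\lla^{n+\frac12}$. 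So the first step is to insert and subtract the midpoint-in-time quantities, writing, for instance,
\begin{equation*}
T_1(z) = \bl \pdt\ww^{n+1} - \pt\ww(t_{n+\frac12}), z\br_{\!s}\!\!\!-\!\!\; + \nu_s(\nabla\ww^{n+\frac12} - \nabla\ww(t_{n+\frac12}), \nabla z)_s + \bl \lla^{n+\frac12} - \lla(t_{n+\frac12}), z\br,
\end{equation*}
using that $(\pt\ww(t_{n+\frac12}), z)_s + \nu_s(\nabla\ww(t_{n+\frac12}),\nabla z)_s + \bl\lla(t_{n+\frac12}),z\br = 0$ from \eqref{eq:wwexact} evaluated at $t = t_{n+\frac12}$; and analogously for $T_2(v)$ from \eqref{eq:uuexact} (note the sign of the $\lla$ term flips, matching \eqref{eq:T2}).

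Next I would estimate each of the three resulting pairings with $z = W_1^{n+1}$ and $v = U_1^{n+1}$. For the volume gradient terms, apply Cauchy--Schwarz and Young's inequality to split off $\tfrac{\dt}{8}\nu_s\|\nabla W_1^{n+1}\|^2$ and $\tfrac{\dt}{8}\nu_f\|\nabla U_1^{n+1}\|^2$, which are absorbed into $\tfrac18\sum \Ss_1^{n+1}$, leaving the $\nabla\ww^{n+\frac12}-\nabla\ww(t_{n+\frac12})$ and $\nabla\uu^{n+\frac12}-\nabla\uu(t_{n+\frac12})$ terms in $D_3(M)$. For the interface $\lla$-pairing, use the trace inequality \eqref{trace} to pass to $\|\nabla W_1^{n+1}\|_{L^2(\Omega_s)}$ and $\|\nabla U_1^{n+1}\|_{L^2(\Omega_f)}$, then Young's inequality again with the $\tfrac{C_\text{tr}^2}{\nu_s}$, $\tfrac{C_\text{tr}^2}{\nu_f}$ weights, contributing the last line of $D_3(M)$ and more terms absorbed into $\sum\Ss_1^{n+1}$. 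For the discrete-time-derivative pairing $\bl\pdt\ww^{n+1}-\pt\ww(t_{n+\frac12}), W_1^{n+1}\br_s$ — which is actually an $(\cdot,\cdot)_s$ volume inner product, I would write it with the correct domain inner product — apply Cauchy--Schwarz and Young's inequality; here the natural quantity to absorb into is $\|W_1^{n+1}\|_{L^2(\Omega_s)}^2 \le 2\Z_1^{n+1}$, which after summing in $n$ and using $\dt\sum \le \dt M \le T$ produces the factor $\tfrac14\max_{1\le n\le M}\Z_1^n$ (the constant $\tfrac14$ coming from choosing the Young parameter so the coefficient is $\tfrac14$, with the leftover going into the $\dt T\sum(\cdots)$ term of $D_3(M)$); similarly for the $U_1^{n+1}$ part.

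The only mildly delicate point is bookkeeping: one must choose the Young's-inequality parameters so that the three separate contributions to $\sum\Ss_1^{n+1}$ total exactly $\tfrac18$ (e.g. $\tfrac{1}{16}+\tfrac{1}{16}$ split between the two subsystems as in Lemmas~\ref{Festimate} and \ref{Restimate}) and so that the $\max\Z_1$ contributions total $\tfrac14$, while all remaining terms are collected into $D_3(M)$ exactly as stated. I expect no real obstacle — this is a standard consistency-error estimate — and the main thing to be careful about is that the midpoint-in-time evaluation of the exact weak forms is legitimate (it is, since $\uu,\ww$ are assumed smooth and $\lla\in L^2(\Sigma)$ for all $t$), and that the inner products $\bl\cdot,\cdot\br_s$ appearing informally are really the volume inner products $(\cdot,\cdot)_s$, $(\cdot,\cdot)_f$.
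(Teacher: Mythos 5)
Your proposal is correct and follows essentially the same route as the paper: subtract the exact weak forms \eqref{eq:uuexact}--\eqref{eq:wwexact} evaluated at $t_{n+\frac12}$ with test functions $W_1^{n+1}$, $U_1^{n+1}$, then apply Cauchy--Schwarz, Young's inequality, and the trace inequality \eqref{trace}, absorbing the gradient contributions into $\frac18\sum\Ss_1^{n+1}$ and using $\dt M\le T$ to turn the $L^2$ contributions into $\frac14\max_{1\le n\le M}\Z_1^n$. The bookkeeping you flag is exactly what the paper does, and your parenthetical correction that the time-derivative pairings are volume inner products is right.
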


\begin{proof}
  We have, by \eqref{eq:T12},
  \begin{equation}\label{eq:Testi}
    \begin{aligned}
      \dt \sum_{n=0}^{M-1} T^{n+1}&=\dt \sum_{n=0}^{M-1} \bigg[T_1^{n+1}(W_1^{n+1})+T_2^{n+1}(U_1^{n+1})\bigg]\\
   &=\dt \sum_{n=0}^{M-1}\bigg[(\pdt \ww^{n+1}, W_1^{n+1})_s+\nu_s(\nabla \ww^{n+\frac12}, \nabla W_1^{n+1})_s + \bl \lla^{n+\frac12}, W_1^{n+1}\br \\
&\hspace{2cm}+(\pdt \uu^{n+1}, U_1^{n+1})_f+\nu_s(\nabla \uu^{n+\frac12}, \nabla U_1^{n+1})_f - \bl \lla^{n+\frac12}, U_1^{n+1}\br\bigg].
    \end{aligned}
  \end{equation}
  It follows from \eqref{eq:wwexact} at $t_{n+\frac12}:=\frac{t_{n+1}+t_n}{2}$ that,
  \begin{equation}\label{eq:ww12w}
    (\pt\ww(t_{n+\frac12}),W_1^{n+1} )_s+\nu_s (\nabla \ww(t_{n+\frac12}), \nabla W_1^{n+1})_s+ \bl \lla(t_{n+\frac12}), W_1^{n+1}\br=0.
  \end{equation}
  Similarly, it follows from \eqref{eq:uuexact} that,
  \begin{equation}\label{eq:uu12u}
    (\partial_t \uu(t_{n+\frac12}),U_1^{n+1} )_f+\nu_f (\nabla \uu(t_{n+\frac12}), \nabla U_1^{n+1})_f- \bl \lla(t_{n+\frac12}), U_1^{n+1}\br=0.
  \end{equation}
Subtract \eqref{eq:ww12w} and \eqref{eq:uu12u} from \eqref{eq:Testi}, we obtain
\begin{equation}\label{eq:Testi1}
    \begin{aligned}
      \dt \sum_{n=0}^{M-1} T^{n+1}&=\dt \sum_{n=0}^{M-1} \bigg[T_1^{n+1}(W_1^{n+1})+T_2^{n+1}(U_1^{n+1})\bigg]\\
   =\dt \sum_{n=0}^{M-1}&\bigg[(\pdt \ww^{n+1}-\pt\ww(t_{n+\frac12}), W_1^{n+1})_s+\nu_s(\nabla \ww^{n+\frac12}-\nabla \ww(t_{n+\frac12}), \nabla W_1^{n+1})_s \\
   & + \bl \lla^{n+\frac12}-\lla(t_{n+\frac12}), W_1^{n+1}\br \\
&+(\pdt \uu^{n+1}-\pt\uu(t_{n+\frac12}), U_1^{n+1})_f+\nu_s(\nabla \uu^{n+\frac12}-\nabla \uu(t_{n+\frac12}), \nabla U_1^{n+1})_f \\
&- \bl \lla^{n+\frac12}-\lla(t_{n+\frac12}), U_1^{n+1}\br\bigg].
    \end{aligned}
  \end{equation}
Consequently, we estimate \eqref{eq:Testi1} by Cauchy-Schwarz inequality and Young's inequality as follows,
\begin{equation*}
  \begin{aligned}
    \dt \sum_{n=0}^{M-1} T^{n+1}&\le\frac{\dt}{8T}\sum_{n=0}^{M-1}\bigg(\|W_1^{n+1}\|^2_{L^2(\Omega_s)}+\|U_1^{n+1}\|^2_{L^2(\Omega_f)}\bigg)\\
    &\quad+C\dt T\sum_{n=0}^{M-1} \bigg(\|\pdt \ww^{n+1}-\pt\ww(t_{n+\frac12})\|^2_{L^2(\Omega_s)}+\|\pdt \uu^{n+1}-\pt\uu(t_{n+\frac12})\|^2_{L^2(\Omega_f)}\bigg)\\
    &\quad+\frac{\dt}{8}\sum_{n=0}^{M-1}\bigg(\nu_s\|\nabla W_1^{n+1}\|^2_{L^2(\Omega_s)}+\nu_f\|\nabla U_1^{n+1}\|^2_{L^2(\Omega_f)}\bigg)\\
    &\quad+C\dt \sum_{n=0}^{M-1}\bigg(\nu_s\|\nabla \ww^{n+\frac12}-\nabla \ww(t_{n+\frac12})\|^2_{L^2(\Omega_s)}+\nu_f\|\nabla \uu^{n+\frac12}-\nabla \uu(t_{n+\frac12})\|^2_{L^2(\Omega_f)}\bigg)\\
    &\quad+C\dt \sum_{n=0}^{M-1}\bigg(\bigg(\frac{C_\text{tr}^2}{\nu_f}+ \frac{C_\text{tr}^2}{\nu_s}\bigg)\|\lla^{n+\frac12}-\lla(t_{n+\frac12})\|^2_{L^2(\Sigma)}\bigg),
  \end{aligned}
\end{equation*}
which gives \eqref{eq:Testimate} and hence completes the proof. 
\end{proof}

\subsection{Estimate for $\frac{\dt}{\alpha}\bl \Lambda_1^{n+1}, \Lambda_0^{n+1}-\Lambda_0^{n} \br$}

In this section we estimate the term
\begin{equation}\label{eq:L1term}
  \frac{\dt}{\alpha}\sum_{n=0}^{M-1}\bl \Lambda_1^{n+1}, \Lambda_0^{n+1}-\Lambda_0^{n} \br.
\end{equation}
Note that $\nabla U_0^{n+1}\cdot\bn_f$ is well-defined on $\Omega_f$. Hence we may extend $\nabla U_0^{n+1}\cdot\bn_f$ into the domain $\Omega_f$ in a similar fashion as in \cite{report}. Let $\phi:\Omega_f\rightarrow \mathbb{R}$ 
 be an affine function that is $1$ on $\Sigma$ and vanishes on $\Gamma_D^f$. Therefore, the following properties are satisfied:
\begin{subequations}
  \begin{align}
    &\phi\in V_f,\label{eq:phiv}\\
    &0\le\phi\le 1,\label{eq:phi01}\\
    &\phi=1\quad\text{on}\quad\Sigma,\label{eq:phi1}\\
    &\|\nabla\phi\|_{L^2(\Omega_f)}\le C.\label{eq:phic}
  \end{align}
\end{subequations}
Define the  function
\begin{equation}
  \tilde{\Lla}_0^{n+1}:=\phi(\nu_f\nabla U_0^{n+1}\cdot\bn_f).
\end{equation}
Note that $\tilde{\Lla}_0^{n+1}=\Lla_0^{n+1}$ on $\Sigma$ and $\tilde{\Lla}_0^{n+1}\in V_f$, so it is indeed an extension. It then follows that
\begin{equation}
  \bl\Lambda_1^{n+1},\Lambda_0^{n+1}-\Lambda_0^{n}\br=\bl\Lambda_1^{n+1},\tilde{\Lla}_0^{n+1}-\tilde{\Lla}_0^{n}\br.
\end{equation}
Denote $\LL^{n+1}:=\Lla_0^{n+1}-\Lla_0^{n}$ and $\tilde{\LL}^{n+1}:=\tilde{\Lla}_0^{n+1}-\tilde{\Lla}_0^{n}$. Furthermore, owing to \eqref{error2} with $v=\tilde{\LL}^{n+1}$, we have
  \begin{equation}
  \begin{aligned}
    \bl\Lambda_1^{n+1},\tilde{\LL}^{n+1}\br&=(\pdt U_1^{n+1}, \tilde{\LL}^{n+1})_f+\nu_f(\nabla U_1^{n+1}, \nabla \tilde{\LL}^{n+1})_f-R_2(\tilde{\LL}^{n+1})-T_2(\tilde{\LL}^{n+1}).
  \end{aligned}  
  \end{equation}
  Therefore, we obtain
  \begin{equation}\label{eq:L1ext}
  \begin{aligned}
    \frac{\dt}{\alpha}\sum_{n=0}^{M-1}\bl\Lambda_1^{n+1},\Lambda_0^{n+1}-\Lambda_0^{n}\br&=\frac{1}{\alpha}\sum_{n=0}^{M-1}( U_1^{n+1}-U_1^n, \tilde{\LL}^{n+1})_f+\frac{\dt}{\alpha}\sum_{n=0}^{M-1}\nu_f(\nabla U_1^{n+1}, \nabla \tilde{\LL}^{n+1})_f\\
    &\quad-\frac{\dt}{\alpha}\sum_{n=0}^{M-1} R_2(\tilde{\LL}^{n+1})-\frac{\dt}{\alpha} \sum_{n=0}^{M-1}T_2(\tilde{\LL}^{n+1}).
  \end{aligned}  
  \end{equation}
  The following lemma states the estimate for $\eqref{eq:L1term}$, based on the previous identity. 
  \begin{lemma}
   Let $1 \le M \le N$, then
\begin{equation}\label{eq:lema4.6}
 \frac{\dt}{\alpha}\sum_{n=0}^{M-1}\bl\Lambda_1^{n+1},\Lambda_0^{n+1}-\Lambda_0^{n}\br\le \frac{1}{8} \sum_{n=0}^{M-1} \Ss^{n+1}+  \frac{1}{4} \max_{1\le n \le M}  \Z^n+ C D_4(M),
\end{equation}
where
\begin{equation}
\begin{aligned}
    D_4(M):=&\dt\frac{\nu_f}{\alpha^2}\sum_{n=0}^{M-1}\|\nabla \tilde{\LL}^{n+1}\|^2_{L^2(\Omega_f)}+\frac{\dt}{\alpha} \sum_{n=0}^{M-1} \|\tilde{\LL}^{n+1}\|^2_{L^2(\Omega_f)}\\
    &+\frac{ T(\dt)^3}{\alpha^2} \sum_{n=1}^{M-1} \| \frac{\tilde{\LL}^{n+1}-\tilde{\LL}^{n}}{(\dt)^2}\|_{L^2(\Omega_f)}^2+\frac{1}{\alpha^2} \|\tilde{\LL}^{M}\|_{L^2(\Omega_f)}^2\\
    &+\dt \frac{C^2_{tr}}{\nu_f}\sum_{n=0}^{M-1}\|\Lambda_0^{n+1}-\Lambda_0^n\|^2_{L^2(\Sigma)}+\dt\sum_{n=0}^{M-1}\nu_f\|\nabla(U_0^{n+1}-U_0^n)\|^2_{L^2(\Omega_f)}\\
    &+\frac{\dt}{\alpha}\sum_{n=0}^{M-1}\|\pdt \uu^{n+1}-\pt\uu(t_{n+\frac12})\|^2_{L^2(\Omega_f)}+\dt\sum_{n=0}^{M-1}\nu_f\|\nabla \uu^{n+\frac12}-\nabla \uu(t_{n+\frac12})\|^2_{L^2(\Omega_f)}\\
    &+\dt \sum_{n=0}^{M-1}\frac{C_\text{tr}^2}{\nu_f}\|\lla^{n+\frac12}-\lla(t_{n+\frac12})\|^2_{L^2(\Sigma)}.
\end{aligned}
\end{equation}
  \end{lemma}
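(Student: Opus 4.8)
The plan is to start from the identity \eqref{eq:L1ext}, which expresses the sum \eqref{eq:L1term} as four terms, and to estimate each of them in turn, absorbing the ``numerical'' factors $\nabla U_1^{n+1}$, $U_1^{n+1}-U_1^n$ into the dissipation $\Ss^{n+1}$ (via Cauchy--Schwarz and Young) and into $\max_n \Z^n$, while everything involving only prediction-step quantities $\tilde{\LL}^{n+1}$, $\Lambda_0^{n+1}-\Lambda_0^n$, $U_0^{n+1}-U_0^n$, and the consistency errors $\lla^{n+1/2}-\lla(t_{n+1/2})$ etc. is collected into $D_4(M)$.

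First I would handle the diffusion term $\frac{\dt}{\alpha}\sum_n \nu_f(\nabla U_1^{n+1},\nabla\tilde\LL^{n+1})_f$: a Cauchy--Schwarz in space followed by Young's inequality with a small parameter splits it into $\tfrac{\dt}{8}\sum_n \nu_f\|\nabla U_1^{n+1}\|^2$ (absorbed into $\tfrac18\sum_n \Ss^{n+1}$) plus $C\dt\tfrac{\nu_f}{\alpha^2}\sum_n\|\nabla\tilde\LL^{n+1}\|^2$, which is the first term of $D_4(M)$. The term $\frac{\dt}{\alpha}\sum_n R_2(\tilde\LL^{n+1})$ is expanded using \eqref{eq:R2}; each piece is of the form $\tfrac12\nu_f(\nabla(U_0^{n+1}-U_0^n),\nabla\tilde\LL^{n+1})_f$ or $-\tfrac12\bl\Lambda_0^{n+1}-\Lambda_0^n,\tilde\LL^{n+1}\br$ (note $\tilde\LL^{n+1}=\Lambda_0^{n+1}-\Lambda_0^n$ on $\Sigma$ by \eqref{eq:phi1}), so these contribute directly the $\nu_f\|\nabla(U_0^{n+1}-U_0^n)\|^2$ and $\frac{C_{tr}^2}{\nu_f}\|\Lambda_0^{n+1}-\Lambda_0^n\|^2$ terms of $D_4(M)$ after Young's inequality; here $\tilde\LL^{n+1}$ terms go into the $\|\nabla\tilde\LL^{n+1}\|^2$ and $\|\tilde\LL^{n+1}\|^2$ reservoirs of $D_4(M)$. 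The term $\frac{\dt}{\alpha}\sum_n T_2(\tilde\LL^{n+1})$ is treated exactly as in Lemma~\ref{Testimate} but with test function $\tilde\LL^{n+1}$ in place of $U_1^{n+1}$: subtract the exact relation \eqref{eq:uu12u} evaluated against $\tilde\LL^{n+1}$, so that $T_2(\tilde\LL^{n+1})$ becomes a pairing of the consistency errors $\pdt\uu^{n+1}-\pt\uu(t_{n+1/2})$, $\nabla\uu^{n+1/2}-\nabla\uu(t_{n+1/2})$, $\lla^{n+1/2}-\lla(t_{n+1/2})$ against $\tilde\LL^{n+1}$; Cauchy--Schwarz and Young then produce the corresponding three consistency-error terms of $D_4(M)$, with the $\|\tilde\LL^{n+1}\|^2$ and $\|\nabla\tilde\LL^{n+1}\|^2$ factors again absorbed into $D_4(M)$.

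The term I expect to be the crux is the first one, $\frac{1}{\alpha}\sum_{n=0}^{M-1}(U_1^{n+1}-U_1^n,\tilde\LL^{n+1})_f$, because it contains the discrete time difference $U_1^{n+1}-U_1^n$, which is \emph{not} directly controlled by $\Ss^{n+1}$ at a single index with a small enough constant (summing $\|U_1^{n+1}-U_1^n\|^2$ naively would only give something of size $O(1)$, not $O((\dt)^2)$). The standard remedy, and the reason $D_4(M)$ contains the second-difference quantity $\sum_n \|(\tilde\LL^{n+1}-\tilde\LL^n)/(\dt)^2\|^2$ and the endpoint term $\|\tilde\LL^M\|^2$, is summation by parts in $n$: write $\sum_n (U_1^{n+1}-U_1^n,\tilde\LL^{n+1})_f = (U_1^M,\tilde\LL^M)_f - (U_1^0,\tilde\LL^1)_f - \sum_n (U_1^n,\tilde\LL^{n+1}-\tilde\LL^n)_f$ (using $U_1^0=0$), then on the remaining sum perform a \emph{second} summation by parts, or rather factor $\tilde\LL^{n+1}-\tilde\LL^n$ and pair $U_1^n$ against it, estimating $\|U_1^n\|_{L^2(\Omega_f)}\le (2\Z^n)^{1/2}$ and bounding $\dt\sum_n \|(\tilde\LL^{n+1}-\tilde\LL^n)/\dt\|\cdot\|U_1^n\|$ via Young, which after introducing the extra $\dt$ factors lands on $\frac{T(\dt)^3}{\alpha^2}\sum_n\|(\tilde\LL^{n+1}-\tilde\LL^n)/(\dt)^2\|^2$; the boundary term $(U_1^M,\tilde\LL^M)_f$ is handled by Young's inequality, $\tfrac14\|U_1^M\|^2 \le \tfrac14\max_n\Z^n$ plus $\tfrac{1}{\alpha^2}\|\tilde\LL^M\|^2$. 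Collecting the absorbed pieces gives the stated $\tfrac18\sum_n\Ss^{n+1}+\tfrac14\max_n\Z^n$, and everything else is exactly $CD_4(M)$, which proves \eqref{eq:lema4.6}. The only subtlety to be careful about is ensuring the various small constants from Young's inequality across the four terms sum to at most $\tfrac18$ for the $\Ss$-part and $\tfrac14$ for the $\Z$-part; this is just bookkeeping.
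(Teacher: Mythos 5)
Your proposal follows essentially the same route as the paper's proof: the same decomposition via \eqref{eq:L1ext}, the same Cauchy--Schwarz/Young absorption of the $\nabla U_1^{n+1}$ and trace terms into $\tfrac18\sum_n\Ss_1^{n+1}$ and $D_4(M)$, the same treatment of $T_2(\tilde{\LL}^{n+1})$ by subtracting the exact equation at $t_{n+\frac12}$, and the same summation-by-parts argument (using $U_1^0=0$) that converts $\frac{1}{\alpha}\sum_n(U_1^{n+1}-U_1^n,\tilde{\LL}^{n+1})_f$ into the second-difference term $\frac{T(\dt)^3}{\alpha^2}\sum_n\|(\tilde{\LL}^{n+1}-\tilde{\LL}^{n})/(\dt)^2\|^2$, the endpoint term $\frac{1}{\alpha^2}\|\tilde{\LL}^M\|^2$, and the $\tfrac14\max_n\Z_1^n$ contribution. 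The argument is correct as outlined.
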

\begin{proof}
  From \eqref{eq:L1ext}, \eqref{eq:R2} and \eqref{eq:T2}, it follows that 
  \begin{equation}\label{eq:L1ext1}
  \begin{aligned}
    &\frac{\dt}{\alpha}\sum_{n=0}^{M-1}\bl\Lambda_1^{n+1},\Lambda_0^{n+1}-\Lambda_0^{n}\br\\
    =&\frac{1}{\alpha}\sum_{n=0}^{M-1}( U_1^{n+1}-U_1^n, \tilde{\LL}^{n+1})_f+\frac{\dt}{\alpha}\sum_{n=0}^{M-1}\nu_f(\nabla U_1^{n+1}, \nabla \tilde{\LL}^{n+1})_f\\
    &-\frac{\dt}{\alpha}\sum_{n=0}^{M-1} \bigg[\frac{1}{2} \nu_f(\nabla (U_0^{n+1}-U_0^n), \nabla \tilde{\LL}^{n+1})_f - \frac{1}{2}\bl \Lambda_0^{n+1}-\Lambda_0^{n}, \tilde{\LL}^{n+1}\br\bigg]\\
    &-\frac{\dt}{\alpha} \sum_{n=0}^{M-1}\bigg[(\pdt \uu^{n+1}, \tilde{\LL}^{n+1})_f+\nu_f(\nabla \uu^{n+\frac12}, \nabla \tilde{\LL}^{n+1})_f - \bl \lla^{n+\frac12}, \tilde{\LL}^{n+1}\br\bigg].
  \end{aligned}  
  \end{equation}
  By using Cauchy-Schwarz inequality, Young's inequality and a similar argument as in Lemma \ref{Testimate}, we obtain
  \begin{equation*}\label{eq:L1ext2}
  \begin{aligned}
    \frac{\dt}{\alpha}&\sum_{n=0}^{M-1}\bl\Lambda_1^{n+1},\Lambda_0^{n+1}-\Lambda_0^{n}\br\\
    \le&\frac{1}{\alpha}\sum_{n=0}^{M-1}( U_1^{n+1}-U_1^n, \tilde{\LL}^{n+1})_f+\frac{\dt}{8}\sum_{n=0}^{M-1}\nu_f\|\nabla U_1^{n+1}\|_{L^2(\Omega_f)}^2+ C\dt\frac{\nu_f}{\alpha^2}\sum_{n=0}^{M-1}\|\nabla \tilde{\LL}^{n+1}\|^2_{L^2(\Omega_f)}\\
    &+\dt\sum_{n=0}^{M-1}\nu_f\|\nabla(U_0^{n+1}-U_0^n)\|^2_{L^2(\Omega_f)}
    +\dt \frac{C^2_{tr}}{\nu_f}\sum_{n=0}^{M-1}\|\Lambda_0^{n+1}-\Lambda_0^n\|^2_{L^2(\Sigma)}\\
    &+C\frac{\dt}{\alpha} \sum_{n=0}^{M-1} \|\tilde{\LL}^{n+1}\|^2_{L^2(\Omega_f)}
    +C\frac{\dt}{\alpha}\sum_{n=0}^{M-1}\|\pdt \uu^{n+1}-\pt\uu(t_{n+\frac12})\|^2_{L^2(\Omega_f)}\\
    & +C\dt\sum_{n=0}^{M-1}\nu_f\|\nabla \uu^{n+\frac12}-\nabla \uu(t_{n+\frac12})\|^2_{L^2(\Omega_f)}+C\dt\sum_{n=0}^{M-1}\frac{C_\text{tr}^2}{\nu_f}\|\lla^{n+\frac12}-\lla(t_{n+\frac12})\|^2_{L^2(\Sigma)}\\
     \le& \frac{1}{\alpha}\sum_{n=0}^{M-1}( U_1^{n+1}-U_1^n, \tilde{\LL}^{n+1})_f+\frac18 \sum_{n=0}^{M-1} \Ss_1^{n+1}+CD_4(M).
  \end{aligned}  
  \end{equation*}
  By a summation by parts formula, and using $U_1^0=0$, we have
  \begin{multline*}
  \frac{1}{\alpha}\sum_{n=0}^{M-1}( U_1^{n+1}-U_1^n, \tilde{\LL}^{n+1})_f
  =\frac{1}{\alpha} \sum_{n=1}^{M-1}  (U_1^n, \tilde{\LL}^{n}-\tilde{\LL}^{n+1})_f+\frac{1}{\alpha} (U_1^M, \tilde{\LL}^{M})_f- \frac{1}{\alpha} (U^0_1, \tilde{\LL}^{1})_f \\
  =-\frac{1}{ \alpha} \sum_{n=1}^{M-1}  (U_1^n, \tilde{\LL}^{n+1}-\tilde{\LL}^{n})_f+\frac{1}{\alpha} (U_1^M, \tilde{\LL}^{M})_f\\
  \le  \frac{\dt}{16 T} \sum_{n=1}^{M-1} \|U_1^n\|_{L^2(\Omega_f)}^2+  \frac{C T(\dt)^3}{\alpha^2} \sum_{n=1}^{M-1} \left\| \frac{\tilde{\LL}^{n+1}-\tilde{\LL}^{n}}{(\dt)^2}\right\|_{L^2(\Omega_f)}^2 \\
  +  \frac{1}{16} \|U_1^M\|_{L^2(\Omega_f)}^2+\frac{C}{\alpha^2} \|\tilde{\LL}^{M}\|_{L^2(\Omega_f)}^2
  \le \frac{1}{4} \max_{1\le n \le M}  \Z^n+ CD_4(M).
  \end{multline*}
  The estimate  \eqref{eq:lema4.6} then follows by combining the above two inequalities,
  which completes the proof.
\end{proof}

\subsection{Main result: error estimate}\label{sec:proofmain}

In this section, we state and prove our main result under Assumption \ref{assump:U0H2} and Corollary \ref{coro:lambdanew1}. To this purpose, we first need the following lemma.

\begin{lemma}\label{lem:Desti}
  We have, for $1\le M\le N$,
  \begin{alignat}{3}
    D_1(M)&\le C(\dt)^4\bigg(\frac{C_\text{tr}^2}{\nu_f}+ \frac{C_\text{tr}^2}{\nu_s}\bigg)\mathsf{Y},\\
    D_2(M)&\le C(\dt)^4\bigg(\frac{C_\text{tr}^2}{\nu_f}+ \frac{C_\text{tr}^2}{\nu_s}\bigg)\mathsf{Y}+\bigg(\frac{C_{tr}^4}{\nu_s^2}\alpha^2+1\bigg)\mathcal{Y},\\
    D_3(M)&\le (\dt)^4\Bigg(T\|\pt^3\ww\|^2_{L^2((0,T),L^2(\Omega_s))}+T\|\pt^3\uu\|^2_{L^2((0,T),L^2(\Omega_f))}\\
  &\quad+\nu_s\|\pt^2\ww\|^2_{L^2((0,T),H^1(\Omega_s))}+\nu_f\|\pt^2\uu\|_{L^2((0,T),H^1(\Omega_f))}\nonumber\\
  &\quad+\bigg(\frac{C_\text{tr}^2}{\nu_f}+ \frac{C_\text{tr}^2}{\nu_s}\bigg)\|\pt^2\lla\|^2_{L^2((0,T),L^2(\Sigma))}\Bigg),\nonumber\\
    D_4(M)&\le C (\dt)^4\Bigg(\bigg(\frac{\nu_f}{\alpha^2}+\frac{1}{\alpha}+1\bigg) \mathcal{Y}+\bigg(\frac{\nu_f^2}{\alpha^2}+\frac{\nu_f}{\alpha^2}+\frac{C^2_{tr}}{\nu_f}\bigg)\mathsf{Y}+\frac{T}{\alpha^2}\mathfrak{Y}\\
    &\quad+\frac{1}{\alpha}\|\pt^3\uu\|^2_{L^2((0,T),L^2(\Omega_s))}+\nu_f\|\pt^2\uu\|^2_{L^2((0,T),H^1(\Omega_s))}+\frac{C_\text{tr}^2}{\nu_f}\|\pt^2\lla\|^2_{L^2((0,T),L^2(\Sigma))}\Bigg),\nonumber
  \end{alignat}
  where $\mathsf{Y}$ is defined in Remark \ref{rem:u0h2} and $\mathcal{Y}$, $\mathfrak{Y}$ are defined in Appendix \ref{app:notations}. 
\end{lemma}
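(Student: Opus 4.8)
\textbf{Proof plan for Lemma~\ref{lem:Desti}.}
The strategy is to estimate each of the four quantities $D_1(M),\dots,D_4(M)$ term by term, in each case recognizing every summand as either (i) a time-difference error of the prediction step, to be bounded by \eqref{eq:pdstepesti}, \eqref{eq:sdesti}, \eqref{eq:U0H2}, or Corollary~\ref{coro:lambdanew1}; or (ii) a time-consistency residual of the midpoint rule, to be bounded by a standard Taylor expansion with integral remainder. Since $D_1(M)$ consists only of $\|\Lambda_0^{n+1}-\Lambda_0^n\|_{L^2(\Sigma)}^2$ terms, it follows at once from \eqref{eq:lambdanew1}; for $D_2(M)$ one additionally invokes \eqref{eq:pdstepesti} for the $\nabla(W_0^{n+1}-W_0^n)$ and $\nabla(U_0^{n+1}-U_0^n)$ terms, and a trace inequality followed by \eqref{eq:pdstepesti} (the full $H^1$-bound on $W_0^{n+1}-W_0^n$) for the $\|W_0^{n+1}-W_0^n\|_{L^2(\Sigma)}^2$ term, producing the stated combination of $\mathsf{Y}$ and $\mathcal{Y}$.

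For $D_3(M)$, every term has the form $\dt\sum_n \|(\text{midpoint quantity})^{n+\frac12} - (\text{exact quantity})(t_{n+\frac12})\|^2$. I would write, for a generic sufficiently smooth $g$,
\begin{equation*}
\pdt g^{n+1}-\pt g(t_{n+\tfrac12}) = \frac{1}{\dt}\int_{t_n}^{t_{n+1}} \Big(s - t_{n+\tfrac12}\Big)\,\pt^2 g(s)\,ds \cdot\frac{1}{?}
\end{equation*}
— more precisely, using the second-order Taylor expansions of $g(t_{n+1})$ and $g(t_n)$ about $t_{n+\frac12}$, the first-derivative terms cancel and one is left with a remainder controlled by $\int_{t_n}^{t_{n+1}}|\pt^3 g|^2\,ds$ times $(\dt)^3$ after an application of Cauchy--Schwarz; summing over $n$ and multiplying by $\dt$ gives $(\dt)^4\|\pt^3 g\|_{L^2((0,T),L^2)}^2$. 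The same device applied to $g^{n+\frac12}-g(t_{n+\frac12})$ (this time only the value and first derivative appear, so the remainder involves $\pt^2 g$) gives the $\nu_s\|\pt^2\ww\|_{L^2((0,T),H^1)}^2$-type and $\|\pt^2\lla\|_{L^2((0,T),L^2(\Sigma))}^2$-type contributions. Collecting these yields the claimed bound on $D_3(M)$.

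For $D_4(M)$, I would proceed analogously but with the extra bookkeeping coming from the extension $\tilde{\LL}^{n+1}=\phi(\nu_f\nabla U_0^{n+1}\cdot\bn_f)$: using \eqref{eq:phi01}--\eqref{eq:phic} and $\tilde{\LL}^{n+1}=\Lla_0^{n+1}-\Lla_0^{n}$ on $\Sigma$, together with \eqref{eq:L0rela}, one bounds $\|\tilde{\LL}^{n+1}\|_{L^2(\Omega_f)}$ and $\|\nabla\tilde{\LL}^{n+1}\|_{L^2(\Omega_f)}$ by $\|\nabla(U_0^{n+1}-U_0^n)\|_{L^2(\Omega_f)}$ and $\|D^2(U_0^{n+1}-U_0^n)\|_{L^2(\Omega_f)}$ (plus lower-order terms), hence by \eqref{eq:pdstepesti} and Assumption~\ref{assump:U0H2}; the second difference $\tilde{\LL}^{n+1}-\tilde{\LL}^{n}$ in the $T(\dt)^3$ term is handled the same way via the $H^1$-norm of $\nabla(U_0^{n+1}-2U_0^n+U_0^{n-1})$, which is exactly what \eqref{eq:sdesti} controls, giving the $\frac{T}{\alpha^2}\mathfrak{Y}$ contribution; the $\|\tilde{\LL}^M\|_{L^2(\Omega_f)}^2$ term is bounded by the max over $n$ in \eqref{eq:U0H2}. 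The remaining midpoint-consistency terms in $D_4(M)$ are treated exactly as in $D_3(M)$. The main obstacle — really the only nonroutine point — is keeping the dependence on the parameters $\nu_f,\nu_s,\alpha,C_{\mathrm{tr}}$ explicit and correctly matching the precise linear combinations of $\mathcal{Y},\mathfrak{Y},\mathsf{Y}$ asserted in the statement; the Taylor-remainder estimates themselves and the trace/extension bounds are standard.
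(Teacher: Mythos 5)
Your proposal is correct and follows essentially the same route as the paper: $D_1$ and $D_2$ from Corollary~\ref{coro:lambdanew1}, \eqref{eq:pdstepesti} and the trace inequality; $D_3$ from the standard midpoint Taylor-remainder bounds; and $D_4$ from the extension bounds via \eqref{eq:phi01}--\eqref{eq:phic}, \eqref{eq:pdstepesti}, \eqref{eq:sdesti} and Assumption~\ref{assump:U0H2}. The only blemish is the garbled intermediate display for $\pdt g^{n+1}-\pt g(t_{n+\frac12})$, but your subsequent verbal description of the cancellation and the resulting $(\dt)^3\int_{t_n}^{t_{n+1}}\|\pt^3 g\|^2$ remainder is the correct argument.
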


\begin{proof}
For $D_1(M)$, using Corollary \ref{coro:lambdanew1}, we have
\begin{equation}
    D_1(M)=\dt\sum_{n=0}^{M-1}\bigg(\frac{C_\text{tr}^2}{\nu_f}+ \frac{C_\text{tr}^2}{\nu_s}\bigg)\|\Lambda_0^{n}-\Lambda_0^{n+1}\|^2_{L^2(\Sigma)}\\
  \le  \bigg(\frac{C_\text{tr}^2}{\nu_f}+ \frac{C_\text{tr}^2}{\nu_s}\bigg) (\dt)^4\mathsf{Y}.
\end{equation}
From \eqref{eq:pdstepesti} and Corollary \ref{coro:lambdanew1},  it follows that 
\begin{alignat*}{1}
D_2(M)=&\dt\sum_{n=0}^{M-1}\bigg(\frac{C_\text{tr}^2}{\nu_f}+ \frac{C_\text{tr}^2}{\nu_s}\bigg)\|\Lambda_0^{n}-\Lambda_0^{n+1}\|^2_{L^2(\Sigma)}+\dt\sum_{n=0}^{M-1}\nu_s\|\nabla(W_0^{n+1}-W_0^n)\|^2_{L^2(\Omega_s)}\\
&\quad+\dt\sum_{n=0}^{M-1}\frac{C^2_{tr}}{\nu_s}\alpha^2\|W_0^{n+1}-W_0^n\|^2_{L^2(\Sigma)}+\dt\sum_{n=0}^{M-1}\nu_f\|\nabla(U_0^{n+1}-U_0^n)\|^2_{L^2(\Omega_f)}\\
&\le  \bigg(\frac{C_\text{tr}^2}{\nu_f}+ \frac{C_\text{tr}^2}{\nu_s}\bigg) (\dt)^4\mathsf{Y}+C (\dt)^4\mathcal{Y}+C\frac{C_{tr}^4}{\nu_s^2}\alpha^2 (\dt)^4\mathcal{Y}+C(\dt)^4\mathcal{Y}.
\end{alignat*}

For $D_3(M)$, we first notice that (see, e.g., \cite{burman2023loosely} for similar bounds)
  \begin{subequations}
  \begin{align}
    \sum_{n=0}^{M-1} \|\pdt \ww^{n+1}-\pt\ww(t_{n+\frac12})\|^2_{L^2(\Omega_s)}&\le \dt^3\|\pt^3\ww\|^2_{L^2((0,T),L^2(\Omega_s))},\label{eq:wcd}\\
     \sum_{n=0}^{M-1} \| \nabla(\ww^{n+1}-\ww(t_{n+\frac12}))\|^2_{L^2(\Omega_s)}&\le \dt^3\|\pt^2\ww\|^2_{L^2((0,T),H^1(\Omega_s))},\label{eq:dwcd}\\
     \sum_{n=0}^{M-1} \| \lla^{n+1}-\lla(t_{n+\frac12})\|^2_{L^2(\Sigma)}&\le \dt^3\|\pt^2\lla\|^2_{L^2((0,T),L^2(\Sigma))}.\label{eq:icd}
  \end{align}
  \end{subequations}
Hence, from these bounds, we get 
\begin{equation}\label{eq:d2uesti}
  \begin{aligned}
  D_3(M)&=\dt T\sum_{n=0}^{M-1} \bigg(\|\pdt \ww^{n+1}-\pt\ww(t_{n+\frac12})\|^2_{L^2(\Omega_s)}+\|\pdt \uu^{n+1}-\pt\uu(t_{n+\frac12})\|^2_{L^2(\Omega_f)}\bigg)\\
  &\quad+\dt \sum_{n=0}^{M-1}\bigg(\nu_s\|\nabla \ww^{n+\frac12}-\nabla \ww(t_{n+\frac12})\|^2_{L^2(\Omega_s)}+\nu_f\|\nabla \uu^{n+\frac12}-\nabla \uu(t_{n+\frac12})\|^2_{L^2(\Omega_f)}\bigg)\\
  &\quad+\dt \sum_{n=0}^{M-1}\bigg(\bigg(\frac{C_\text{tr}^2}{\nu_f}+ \frac{C_\text{tr}^2}{\nu_s}\bigg)\|\lla^{n+\frac12}-\lla(t_{n+\frac12})\|^2_{L^2(\Sigma)}\bigg)\\
  &\le  T(\dt)^4(\|\pt^3\ww\|^2_{L^2((0,T),L^2(\Omega_s))}+\|\pt^3\uu\|^2_{L^2((0,T),L^2(\Omega_f))})\\
  &\quad+(\dt)^4(\nu_s\|\pt^2\ww\|^2_{L^2((0,T),H^1(\Omega_s))}+\nu_f\|\pt^2\uu\|^2_{L^2((0,T),H^1(\Omega_f))})\\
  &\quad+(\dt)^4\bigg(\frac{C_\text{tr}^2}{\nu_f}+ \frac{C_\text{tr}^2}{\nu_s}\bigg)\|\pt^2\lla\|^2_{L^2((0,T),L^2(\Sigma))}.
    \end{aligned}
\end{equation}
For $D_4(M)$, we first estimate the terms involving $\tilde{\LL}^{n+1}$. From \eqref{eq:phi01}, \eqref{eq:phic}, \eqref{eq:pdstepesti} and Assumption~\ref{assump:U0H2}, it follows that 
\begin{equation}
  \begin{aligned}
    &\dt\frac{\nu_f}{\alpha^2}\sum_{n=0}^{M-1}\|\nabla \tilde{\LL}^{n+1}\|^2_{L^2(\Omega_f)}\\
    &\le \dt\frac{\nu_f}{\alpha^2}\sum_{n=0}^{M-1} \nu_f\|\nabla(U_0^{n+1}-U_0^{n})\|^2_{L^2(\Omega_f)}+\dt\frac{\nu_f}{\alpha^2}\sum_{n=0}^{M-1} \nu_f\|D^2(U_0^{n+1}-U_0^{n})\|^2_{L^2(\Omega_f)}\\
    &\le C (\dt)^4  \left( \frac{\nu_f}{\alpha^2}\mathcal{Y}+ \frac{\nu_f^2}{\alpha^2}\mathsf{Y}\right).
  \end{aligned}
\end{equation}
Similarly, from \eqref{eq:pdstepesti}, we have 
\begin{equation}
  \frac{\dt}{\alpha} \sum_{n=0}^{M-1} \left\|\tilde{\LL}^{n+1}\right\|^2_{L^2(\Omega_f)}\le C\frac{\dt}{\alpha} \sum_{n=0}^{M-1} \nu_f\|\nabla(U_0^{n+1}-U_0^{n})\|^2_{L^2(\Omega_f)}\le \frac{C}{\alpha}(\dt)^4\mathcal{Y}.
\end{equation}
On the other hand, from \eqref{eq:sdesti}, we have 
\begin{equation}
\begin{aligned}
  \frac{T(\dt)^3}{\alpha^2} \sum_{n=1}^{M-1} \| \frac{\tilde{\LL}^{n+1}-\tilde{\LL}^{n}}{(\dt)^2}\|_{L^2(\Omega_f)}^2&\le C\frac{T(\dt)^3}{\alpha^2} \sum_{n=1}^{M-1}\nu_f\left\|\frac{\nabla(U_0^{n+1}-2U_0^{n}+U_0^{n-1})}{(\dt)^2}\right\|_{L^2(\Omega_f)}^2\\
  &\le C\frac{T}{\alpha^2}(\dt)^4 \mathfrak{Y}.
\end{aligned}
\end{equation}
At last, from Assumption \ref{assump:U0H2}, we get 
\begin{equation}
\begin{aligned}
  \frac{1}{\alpha^2} \|\tilde{\LL}^{M}\|_{L^2(\Omega_f)}^2&\le C\frac{\nu_f}{\alpha^2}\|\nabla(U_0^{M+1}-U_0^M)\|^2_{L^2(\Omega_f)}\le C\frac{\nu_f}{\alpha^2}(\dt)^4\mathsf{Y}.
\end{aligned}
\end{equation}
The remaining terms in $D_4(M)$ can be estimated similarly to those of $D_1(M)$ to $D_3(M)$ as follows, using \eqref{eq:pdstepesti}, \eqref{eq:lambdanew1} and \eqref{eq:wcd}-\eqref{eq:icd}, yielding 
\begin{equation}
  \begin{aligned}
    &\dt \frac{C^2_{tr}}{\nu_f}\sum_{n=0}^{M-1}\|\Lambda_0^{n+1}-\Lambda_0^n\|^2_{L^2(\Sigma)}+\dt\sum_{n=0}^{M-1}\nu_f\|\nabla(U_0^{n+1}-U_0^n)\|^2_{L^2(\Omega_f)}\\
    &+\frac{\dt}{\alpha}\sum_{n=0}^{M-1}\|\pdt \uu^{n+1}-\pt\uu(t_{n+\frac12})\|^2_{L^2(\Omega_f)}+\dt\sum_{n=0}^{M-1}\nu_f\|\nabla \uu^{n+\frac12}-\nabla \uu(t_{n+\frac12})\|^2_{L^2(\Omega_f)}\\
    &+\dt \sum_{n=0}^{M-1}\frac{C_\text{tr}^2}{\nu_f}\|\lla^{n+\frac12}-\lla(t_{n+\frac12})\|^2_{L^2(\Sigma)}\\
    &\le \frac{C^2_{tr}}{\nu_f}(\dt)^4\mathsf{Y}+C(\dt)^4\mathcal{Y}+\frac{1}{\alpha}\dt^4\|\pt^3\uu\|^2_{L^2((0,T),L^2(\Omega_s))}\\
    &\quad+\nu_f\dt^4\|\pt^2\uu\|^2_{L^2((0,T),H^1(\Omega_s))}+\frac{C_\text{tr}^2}{\nu_f}\dt^4\|\pt^2\lla\|^2_{L^2((0,T),L^2(\Sigma))}.
  \end{aligned}
\end{equation}
We conclude the proof by combining all the above inequalities.
\end{proof}

We now state and prove the main result of this paper. 
\begin{theorem}\label{thm:mainthm}
The following error estimate holds  
\begin{equation}\label{eq:mainthm}
\max_{1 \le M \le N} \Z_1^{M}+ \sum_{n=0}^{N-1} \Ss_1^{n+1} \le    C (\dt)^4 \mathsf{Y}_1,
\end{equation}
where
\begin{alignat*}{1}
\mathsf{Y}_1:=&\Bigg(\frac{C_{tr}^2}{\nu_f}+\frac{C_{tr}^2}{\nu_s}+\frac{C_{tr}^2}{\nu_f^2}\alpha^2+\frac{\nu_f+\alpha}{\alpha^2}+1\Bigg)\mathcal{Y}
+\Bigg(\frac{C_{tr}^2}{\nu_f}+\frac{C_{tr}^2}{\nu_s}+\frac{\nu_f+\nu_f^2}{\alpha^2}\Bigg)\mathsf{Y}\\
& +\frac{T}{\alpha^2}\mathfrak{Y}
+T\|\pt^3\ww\|^2_{L^2((0,T),L^2(\Omega_s))}+\left(T+\frac{1}{\alpha}\right)\|\pt^3\uu\|^2_{L^2((0,T),L^2(\Omega_f))}\\
&+\nu_s\|\pt^2\ww\|^2_{L^2((0,T),H^1(\Omega_s))}+\nu_f\|\pt^2\uu\|_{L^2((0,T),H^1(\Omega_f))}+\bigg(\frac{C_\text{tr}^2}{\nu_f}+ \frac{C_\text{tr}^2}{\nu_s}\bigg)\|\pt^2\lla\|^2_{L^2((0,T),L^2(\Sigma))}.
\end{alignat*}
Here, the notation $\mathsf{Y}$ is defined in Assumption \ref{assump:U0H2} and $\mathcal{Y}$, $\mathfrak{Y}$ are defined in Appendix \ref{app:notations}.
\end{theorem}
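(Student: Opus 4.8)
The plan is to combine the energy identity of Lemma~\ref{errorLemma1} with the four preliminary estimates (Lemmas~\ref{Festimate}, \ref{Restimate}, \ref{Testimate} and Lemma~\ref{eq:lema4.6}) and then close a discrete Gr\"onwall-type argument. First I would sum the identity \eqref{eq:erroreq} over $n=0,\dots,M-1$ for an arbitrary $1\le M\le N$. Since $\Z_1^0=0$ (because $U_1^0=W_1^0=\Lambda_1^0=0$ by the initial conditions and the convention $u_1^i=\uu^i$, $w_1^i=\ww^i$ for $i<0$), the telescoping left-hand side becomes
\begin{equation*}
\Z_1^{M}+\sum_{n=0}^{M-1}\Ss_1^{n+1}
=\dt\sum_{n=0}^{M-1}\bigl(F_1^{n+1}+R^{n+1}+T^{n+1}\bigr)
+\frac{\dt}{\alpha}\sum_{n=0}^{M-1}\bl\Lambda_1^{n+1},\Lambda_0^{n+1}-\Lambda_0^{n}\br.
\end{equation*}

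Next I would insert the four right-hand side bounds. Lemmas~\ref{Festimate}, \ref{Restimate}, and \ref{eq:lema4.6} each contribute a term $\tfrac18\sum_{n=0}^{M-1}\Ss_1^{n+1}$, while Lemma~\ref{Testimate} and Lemma~\ref{eq:lema4.6} each contribute a term $\tfrac14\max_{1\le n\le M}\Z_1^n$; collecting these, the dissipation terms $\tfrac38\sum\Ss_1^{n+1}$ move to the left-hand side, leaving $\tfrac58\sum\Ss_1^{n+1}$, and the two $\tfrac14\max\Z_1^n$ terms sum to $\tfrac12\max_{1\le n\le M}\Z_1^n$. Here the key point is that none of the residual terms $D_1(M),\dots,D_4(M)$ involve the correction-step errors $\Z_1$ or $\Ss_1$ — they depend only on the prediction-step quantities $U_0,W_0,\Lambda_0$ and on consistency errors of the exact solution — so after absorption we obtain, for every $M$,
\begin{equation*}
\Z_1^{M}+\frac58\sum_{n=0}^{M-1}\Ss_1^{n+1}\le \frac12\max_{1\le n\le M}\Z_1^{n}+C\bigl(D_1(M)+D_2(M)+D_3(M)+D_4(M)\bigr).
\end{equation*}
Taking the maximum over $1\le M\le N$ on the left, noting $D_j(M)\le D_j(N)$ since all sums are of nonnegative terms, and absorbing $\tfrac12\max\Z_1^M$ into the left-hand side yields
$\max_{1\le M\le N}\Z_1^{M}+\sum_{n=0}^{N-1}\Ss_1^{n+1}\le C\sum_{j=1}^{4}D_j(N)$.
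Finally I would invoke Lemma~\ref{lem:Desti} to bound each $D_j(N)$ by $C(\dt)^4$ times the appropriate combination of $\mathcal Y$, $\mathfrak Y$, $\mathsf Y$ and the solution-regularity norms, collect all the resulting constants into the single quantity $\mathsf Y_1$ as defined in the statement, and conclude \eqref{eq:mainthm}.

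The main subtlety, and the one place where care is needed, is the handling of the $\max_{1\le n\le M}\Z_1^n$ terms: unlike a clean Gr\"onwall loop, here the right-hand side already contains $\max\Z_1^n$ rather than $\Z_1^n$ summed, so I cannot iterate — instead I must first pass to the maximum over $M$ on the entire inequality and only then absorb, which works precisely because the coefficient $\tfrac12<1$. A second point to watch is bookkeeping: one must verify that the $\Z^n$, $\Ss^{n+1}$ appearing in Lemma~\ref{eq:lema4.6} are indeed the subscript-$1$ quantities $\Z_1^n$, $\Ss_1^{n+1}$ (a typographical omission in that lemma), and that the constants in $D_4$ are tracked correctly so that they land in the stated $\mathsf Y_1$. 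Everything else is routine Cauchy–Schwarz/Young bookkeeping already carried out in the preceding lemmas.
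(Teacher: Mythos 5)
Your proposal is correct and follows essentially the same route as the paper: sum the identity of Lemma~\ref{errorLemma1}, insert the four preliminary lemmas, absorb the $\Ss_1$ and $\max\Z_1$ terms (taking the maximum over $M$ before absorbing the $\tfrac12\max\Z_1^n$), and conclude via Lemma~\ref{lem:Desti}. The only slip is bookkeeping: Lemma~\ref{Testimate} also contributes a $\tfrac18\sum\Ss_1^{n+1}$ term, so the retained coefficient on the dissipation is $\tfrac12$ rather than $\tfrac58$, which does not affect the argument.
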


\begin{proof}
For $1 \le M \le N$, by taking the sum over $n=0,\ldots,M-1$ in Lemma \ref{errorLemma1} and using Lemmas \ref{Festimate} and \ref{Testimate}, we hvave 
\begin{alignat*}{1}
\Z_1^{M}+ \frac{1}{2} \sum_{n=0}^{M-1} \Ss_1^{n+1} \le    \frac{1}{2} \max_{1\le n \le M}  \Z_1^n + C\sum_{i=1}^4 D_i(M).
\end{alignat*}
Taking the maximum over $1 \le M \le N$, yields 
\begin{alignat*}{1}
\frac{1}{2} \max_{1 \le M \le N} \Z^{M}\le    C\max_{1 \le M \le N} \sum_{i=1}^4 D_i(M).
\end{alignat*}
Hence, we also get
\begin{alignat*}{1}
\sum_{n=0}^{N-1} \Ss^{n+1}\le    C\max_{1 \le M \le N} \sum_{i=1}^4 D_i(M).
\end{alignat*}
The estimate \eqref{eq:mainthm} then follows from Lemma \ref{lem:Desti}, which concludes the proof. 
\end{proof}

\section{Numerical experiments}\label{sec:numerics}

In this section, we present two examples  which numerically illustrate the analysis and observations of the previous sections. We use the following notations to denote errors of the correction step,
\begin{alignat*}{2}
  &e_{u1}:= \|u_1^N-\uu^N\|_{L^2(\Omega_f)},\quad&& e_{w1}:= \|w_1^N-\ww^N\|_{L^2(\Omega_s)},\\
  &e_{du1}:= \|\nabla(u_1^N-\uu^N)\|_{L^2(\Omega_f)}, \quad && e_{\lambda}:=\|\lambda_0^N-\lla^N\|_{L^2(\Sigma)}, \\
  & e_{1,\lambda}:=\|(\lambda_0^N-\lla^N)-(\lambda_0^{N-1}-\lla^{N-1})\|_{L^2(\Sigma)}.
\end{alignat*}
All numerical experiments are performed using FEniCS and multiphenics \cite{alnaes2015fenics,multiphenics}.

\begin{example}[Non-horizontal interface]\label{ex:exneumann}

In this example, we test the  defect-correction scheme \eqref{eq:correction} in a problem with non-horizontal interface. We let $\Omega=(0,1)^2$ and we take the solution to be
\begin{equation*}
    \ww=\uu= e^{-2\pi^2 t} \cos(\pi x_1) \sin(\pi x_2).
\end{equation*}

 We let $\Sigma$ be defined as the straight line connecting $(0,0.25)$ and $(1,0.75)$. We then define $\Omega_s$ as the region above $\Sigma$ and $\Omega_f$ as the region below $\Sigma$ (see Figure~\ref{figure:neumann} for an illustration). The diffusion parameters are both set to be one, $\nu_s=\nu_f=1$. 
We choose $h=\dt$, $T=0.25$, $\alpha=4$ and use the piece-wise linear finite element method for the spatial discretization.  The results are reported in Table~\ref{fig:neumann}. As we can observe, second-order convergence $\mathcal O ((\dt)^2)$ is obtained for $e_{u1}$ and $e_{w1}$, which is in agreement with the theoretical findings provided by Theorem~\ref{thm:mainthm}. Finally, we also observe that $e_{1,\lambda}$ converges as $\mathcal O((\dt)^2)$. This matches with Corollary~\ref{coro:lambdanew1}.

\end{example}

\begin{table}[h]
\begin{center}

\begin{tabular}{|c||c|c|c|c|c|c|c|c|c|c|}
\hline
$\Dt$ &  $e_{u1}$ &  rates & $e_{w1}$ &  rates  &  $e_{\lambda}$ & rates & $e_{1,\lambda}$   & rates &$e_{du1}$ &  rates  \\
\hline
$(1/2)^2$&2.47e-02&--&1.14e-01&--&8.51e-01&--&8.54e-01&--&2.29e-01&--\\
\hline
$(1/2)^3$&1.44e-02&0.78&5.68e-02&1.01&5.01e-01&0.76&3.81e-01&1.16&7.43e-02&1.63\\
\hline
$(1/2)^4$&1.36e-02&0.08&1.35e-02&2.08&1.94e-01&1.37&1.45e-01&1.39&7.31e-02&0.02\\
\hline
$(1/2)^5$&5.89e-03&1.21&3.20e-03&2.07&5.34e-02&1.86&2.44e-02&2.58&2.82e-02&1.37\\
\hline
$(1/2)^6$&1.50e-03&1.98&1.07e-03&1.59&1.84e-02&1.54&4.38e-03&2.48&6.69e-03&2.08\\
\hline
$(1/2)^7$&3.59e-04&2.06&2.75e-04&1.95&7.49e-03&1.30&9.10e-04&2.27&1.56e-03&2.10\\
\hline
$(1/2)^8$&8.69e-05&2.05&6.84e-05&2.01&3.39e-03&1.14&2.07e-04&2.13&3.71e-04&2.07\\
\hline
$(1/2)^9$&2.13e-05&2.03&1.69e-05&2.01&1.61e-03&1.07&4.95e-05&2.07&9.01e-05&2.04\\
\hline
\end{tabular}
\end{center}
\caption{Errors and convergence rates for Example \ref{ex:exneumann}.}\label{fig:neumann}
\end{table}

\begin{example}[Different Viscosity]\label{ex:ddiffusion}
In this example, the defect-correction scheme \eqref{eq:correction} is tested with an exact solution involing different diffusion parameters in $\Omega_f$ and $\Omega_s$. We let $\Omega=(0,1)^2$, $\Sigma$ to be the straight line $x_2=0.75$ and we take the solution to be
\begin{equation}
  \ww=e^{-2\pi^2 t} \cos(\pi x_1) \sin\left(4\pi \frac{\nu_f}{\nu_s}(x_2-0.75)\right),\quad 
  \uu=e^{-2\pi^2 t} \cos(\pi x_1) \sin\big(4\pi (x_2-0.75)\big).
\end{equation}



Notice that $\ww=\uu$ and  $\Sigma$ and $\nu_s \nabla \ww \cdot \bn_s + \nu_f \nabla \uu \cdot \bn_f = 0$ on the interface $\Sigma$. We also have $\uu\in V_f$ and $\ww\in V_s$ if $\frac{\nu_f}{\nu_s}$ is an integer. We calculate  $g_1=\pt \uu-\nu_f \Delta \uu$, $g_2=\pt \ww-\nu_s \Delta \ww$ and solve the parabolic-parabolic interface problem \eqref{eq:ppinterface} with nonhomegeneous right-hand sides $g_1$ and $g_2$. 
We choose $\nu_f=2$, $\nu_s=1$, $h=\dt$, $T=0.25$, $\alpha=4$ and use the piece-wise linear finite element method for spatial discretization. We use the same notations as that of Example \ref{ex:exneumann}.

\end{example}

\begin{table}[h]
\begin{center}

\begin{tabular}{|c||c|c|c|c|c|c|c|c|c|c|}
\hline
$\Dt$ &  $e_{u1}$ &  rates & $e_{w1}$ &  rates  &  $e_{\lambda}$ & rates & $e_{1,\lambda}$   & rates &$e_{du}$ &  rates  \\
\hline
$(1/2)^2$&5.21e-01&-&3.49e-01&-&1.35e+00&-&7.90e-01&-&2.63e+00&-\\
\hline
$(1/2)^3$&1.38e-01&1.92&3.07e-01&0.19&1.23e+00&0.13&4.11e-01&0.94&1.70e+00&0.63\\
\hline
$(1/2)^4$&3.81e-02&1.86&7.14e-02&2.10&8.42e-01&0.55&9.23e-02&2.15&4.89e-01&1.79\\
\hline
$(1/2)^5$&9.85e-03&1.95&1.92e-02&1.90&4.65e-01&0.86&1.69e-02&2.45&1.27e-01&1.95\\
\hline
$(1/2)^6$&2.48e-03&1.99&4.87e-03&1.98&2.40e-01&0.96&3.89e-03&2.12&3.21e-02&1.98\\
\hline
$(1/2)^7$&6.22e-04&2.00&1.22e-03&1.99&1.22e-01&0.98&9.54e-04&2.03&8.06e-03&1.99\\
\hline
$(1/2)^8$&1.55e-04&2.00&3.06e-04&2.00&6.15e-02&0.99&2.40e-04&1.99&2.02e-03&2.00\\
\hline
$(1/2)^9$&3.89e-05&2.00&7.65e-05&2.00&3.09e-02&0.99&6.02e-05&1.99&5.05e-04&2.00\\
\hline
\end{tabular}
\end{center}
\caption{Error and convergence rates for Example \ref{ex:ddiffusion}}\label{fig:conv2}
\end{table}
We can clearly see $\mathcal O((\dt)^2)$ convergence rates for $e_{u1}$, $e_{w1}$ and $e_{du}$, which agrees with our analysis. One also observes the optimal convergence rates for $e_{1,\lambda}$ which match with Corollary \ref{coro:lambdanew1}.

\section{The Dirichlet interface problem}\label{sec:ppinterfacediri}

In this section, we briefly discuss the defect-correction methods for the following parabolic\textbackslash parabolic interface problem with Dirichlet boundary conditions on all sides (cf. \cite{burman2023loosely}). As we shall demonstrate there is an inconsistency in the formulation at the the end points of $\Sigma$ when $\Sigma$ is not perpendicular to the two sides of $\partial \Omega$. Then, this inconsistency makes the second-order rates of $\Lambda_0^{n+1}- \Lambda_0^n$ deteriorate, which means that the Corollary \ref{coro:lambdanew1} is not valid in this case. This in turn destroys the second-order convergence rate for the correction method measured in the $H^1$ norm. Nonetheless, when measured in the $L^2$ norm the approximations of the correction method seem to be second-order accurate.

The Dirichlet problem is as follows. 
\begin{subequations}\label{PPsplitF}
\begin{alignat}{2}
\pt \uu- \nu_f\Delta \uu=&0, \quad && \text{ in } (0, T) \times \Omega_f, \\
\uu(0, x)=& \uu_0(x), \quad  && \text{ in } \Omega_f, \\
\uu=&0 , \quad && \text{ on }  (0, T) \times \partial \Omega_f \setminus \Sigma,
\end{alignat}
\end{subequations}

\begin{subequations}\label{PPsplitS}
\begin{alignat}{2}
\pt \ww-\nu_s \Delta \ww=&0, \quad && \text{ in }(0, T) \times \Omega_s, \\
\ww(0, x)=& \ww_0(x), \quad  && \text{ in } \Omega_s, \\
\ww=&0,  \quad && \text{ on } (0, T) \times \partial \Omega_s \setminus \Sigma,
\end{alignat}
\end{subequations}
and
\begin{subequations}\label{PPsplitI}
\begin{alignat}{2}
\ww - \uu =& 0, \quad && \text{ in } (0, T) \times \Sigma, \\
\nu_s\nabla \ww \cdot \bn_s + \nu_f \nabla \uu \cdot \bn_f =& 0 \quad && \text{ in } (0, T) \times \Sigma,
\end{alignat}
\end{subequations}
where $\bn_f$ and $\bn_s$ are the outward facing normal vectors for $\Omega_f$ and $\Omega_s$, respectively. 

We define the following spaces:
\begin{subequations}\label{eq:disspacesdiri}
  \begin{alignat}{1}
 \tilde{V}_f=&\{ v \in H^1(\Omega_f): v=0 \text{ on }  \partial\Omega_f\setminus\Sigma \},  \\
 \tilde{V}_s= &\{ v \in H^1(\Omega_s): v=0 \text{ on  } \partial\Omega_s\setminus\Sigma \},  \\
\tilde{V}_g= & L^2(\Sigma).
\end{alignat}
\end{subequations}

Note that the variational form for \eqref{PPsplitF}-\eqref{PPsplitI} is the same as that of \eqref{eq:ppinterface}. The defect-correction method for \eqref{PPsplitF}-\eqref{PPsplitI} is almost identical to that of \eqref{eq:ppinterface}, the only difference is that we replace the $V$ spaces with the $\tilde{V}$ spaces. 

There is an inconsistency of $\lambda_0$ at the end points of $\Sigma$. As in \eqref{defect3} we have $\lambda_0^{n+1}-\lambda_0^n=\alpha(w_0^{n+1}-u_0^{n+1})$ on $\Sigma$. Moreover, $w_0^{n+1}$ and $u_0^{n+1}$ have zero Dirichlet boundary conditions at two end points of $\Sigma$ which force $\lambda_0^{n+1}=\lambda_0^n$ at those endpoints. Therefore, $\lambda_0^{n+1}$ does not change in time hence it does not approximate $\lla^{n+1}$ at those endpoints and consequently affect the convergence rates on $\Sigma$.

\begin{example}[Non-horizontal Interface for the Dirichlet parabolic-parabolic problems]\label{ex:exdiri}
We now present a numerical example which shows this phenomena.  
We let $\Omega=(0,1)^2$ and we take the solution to be
\begin{equation*}
    \ww=\uu= e^{-2\pi^2 t} \sin(\pi x_1) \sin(\pi x_2).
\end{equation*}
We use the same notations and parameters as those in Example \ref{ex:exneumann}. The convergence results are shown in Table \ref{fig:diri}.
\end{example}

\begin{table}[h]
\begin{center}

\begin{tabular}{|c||c|c|c|c|c|c|c|c|c|c|}
\hline
$\Dt$ &  $e_{u1}$ &  rates & $e_{w1}$ &  rates  &  $e_{\lambda}$ & rates & $e_{1,\lambda}$   & rates &$e_{du1}$ &  rates  \\
\hline
$(1/2)^2$&1.99e-02&-&4.18e-02&-&7.04e-01&-&6.99e-01&-&2.21e-01&-\\
\hline
$(1/2)^3$&7.58e-03&1.39&5.82e-02&-0.48&5.04e-01&0.48&1.18e-01&2.57&4.08e-02&2.43\\
\hline
$(1/2)^4$&1.40e-02&-0.88&1.18e-02&2.30&2.75e-01&0.88&1.13e-01&0.06&7.61e-02&-0.90\\
\hline
$(1/2)^5$&7.05e-03&0.99&3.55e-03&1.73&1.54e-01&0.83&2.61e-02&2.11&3.48e-02&1.13\\
\hline
$(1/2)^6$&1.83e-03&1.94&1.41e-03&1.33&1.02e-01&0.59&6.20e-03&2.07&9.32e-03&1.90\\
\hline
$(1/2)^7$&4.47e-04&2.04&3.80e-04&1.89&7.03e-02&0.54&1.84e-03&1.75&2.74e-03&1.76\\
\hline
$(1/2)^8$&1.08e-04&2.05&9.63e-05&1.98&4.94e-02&0.51&5.95e-04&1.63&1.01e-03&1.45\\
\hline
$(1/2)^9$&2.65e-05&2.03&2.41e-05&2.00&3.47e-02&0.51&2.00e-04&1.57&4.43e-04&1.19\\
\hline
\end{tabular}
\end{center}
\caption{Errors and convergence rates for Example \ref{ex:exdiri}}\label{fig:diri}
\end{table}

We can clearly see that $e_{1,\lambda}$ is not second-order accurate and hence Corollary \ref{coro:lambdanew1} is not valid in this case. Moreover, the gradient error of $\uu$ of the correction method, $e_{du1}$, is not second-order. Finally, we observe that $e_{u1}$ and $e_{w1}$ are second-order accurate. It is worthwhile mentioning that if the interface is perpendicular to  two of the sides of $\partial \Omega$ then there is no inconsistency for the Dirichlet problem and, numerically, one regains second-order convergence for all the variables above. We omit the numerical experiments here.

We finish this section by stating that there is a remedy for the Dirichlet problem, when $\nu_s=\nu_f=\nu$, in the case $\Sigma$ is straight line and that is not parallel to the two sides of $\Omega$.
To be specific, we denote $\bm{\tau}$ as the tangent vector of $\Sigma$ and $\mathbf{s}$ as the tangent vector of the side of $\Omega$. It is easy to see that there exists constants $a$ and $b$ such that $\mathbf{n}_f=a\bm{\tau}+b\mathbf{s}$, and hence $\nabla \uu\cdot\bn_f=a\nabla \uu\cdot\bm{\tau}+b\nabla \uu\cdot\mathbf{s}$. Note that $\mathbf{n}_s=-a\bm{\tau}-b\mathbf{s}$.

\begin{remark}
We take $\Omega=(0,1)^2$ and let the interface $\Sigma$ be the straight line connecting the point $(0,0.25)$ with the point $(1,0.75)$. In this case, the unit tangent to $\Sigma$ is $\bm{\tau}=[2/\sqrt{5}, 1/\sqrt{5}]^t$. The normal $\mathbf{n}_f$ is $\mathbf{n}_f=[-1/\sqrt{5}, 2/\sqrt{5}]^t$. We let $\mathbf{s}=[0,1]^t$, hence we can write $\mathbf{n}_f=\frac{-1}{2} \bm{\tau}+ \frac{\sqrt{5}}{2} \mathbf{s}$. See Figure \ref{figure:domain} for an example.
\end{remark}

\begin{figure}
\centering
\begin{tikzpicture}[scale=0.95]
\draw[thick] plot coordinates {(0,0) (4,0) (4,4) (0,4) (0,0)};
\draw[thick] plot coordinates {(0,1) (4,3) };

 \node at (2,1) {$\Omega_f$};
 \node at (2,3) {$\Omega_s$};
 \node at (4.3,3) {$\Sigma$};
 \node at (1.5,2.3) {$\mathbf{n}_f$};
 \node at (2.2,2.6) {$\mathbf{s}$};
 \node at (2.5,1.9) {$\bm{\tau}$};



 \draw[->,thick] (2,2) -- (1.8,2.4);
 \draw[->,thick,red] (2,2) -- (2.5,2.25);
 \draw[->,thick,red] (2,2) -- (2,2.6);

\end{tikzpicture}
\caption{The domains $\Omega_f$ and $\Omega_s$ with non-horizontal interface $\Sigma$.} \label{figure:domain}
\end{figure}

We then define the following modified defect-correction method.
\paragraph{\bf Prediction step}
Find $w_0^{n+1}  \in \tilde{V}_s$, $u_0^{n+1} \in \tilde{V}_f$, and $\lambda_0^{n+1} \in \tilde{V}_g $ such that for $n\geq 0$,
\begin{subequations}\label{defect1m}
\begin{alignat}{2}
&(\pdt w_0^{n+1}, z)_s+\nu(\nabla w_0^{n+1}, \nabla z)_s + b\alpha \bl  w_0^{n+1}-u_{0}^{n}, z \br \label{defect11m}\\
&\quad+ a\nu\bl\nabla w^{n+1}_0\cdot\bm{\tau},z\br+\bl\lambda^n,z\br=0 , \quad && \forall z \in \tilde{V}_s, \nonumber\\
&(\pdt u_0^{n+1}, v)_f+\nu(\nabla u_0^{n+1}, \nabla v)_f- \bl \lambda_{0}^{n+1}, v\br-a\nu\bl\nabla u^{n+1}_0\cdot\bm{\tau},v\br=0, \quad &&\forall v \in \tilde{V}_f,  \label{defect21m}\\
&\bl (\lambda_0^{n+1}- \lambda_{0}^{n})+  \alpha (u_0^{n+1}-w_0^{n+1}) , \mu \br= 0,   \quad &&\forall \mu \in \tilde{V}_g.  \label{defect31m}
\end{alignat}
\end{subequations}

\paragraph{\bf Correction step }
Find $w_1^{n+1}  \in \tilde{V}_s$, $u_1^{n+1} \in \tilde{V}_f$, and $\lambda_1^{n+1} \in \tilde{V}_g $ such that for $n\geq 0$,
\begin{subequations}\label{correction1m}
  \begin{alignat}{2}
&(\pdt w_1^{n+1}, z)_s+\nu(\nabla w_1^{n+1}, \nabla z)_s + b\alpha \bl  w_1^{n+1}-u_{1}^{n}, z \br\nonumber \\
&\quad\quad+ a\nu\bl\nabla w^{n+1}_1\cdot\bm{\tau},z\br+\bl \lambda_{1}^{n}, z\br \quad &&\nonumber\\
&=\nu(\nabla w_0^{n+1}, \nabla z)_s + b\alpha \bl  w_0^{n+1}-w_{0}^{n}, z \br + \bl \lambda_{0}^{n}, z\br+a\nu\bl\nabla w^{n+1}_0\cdot\bm{\tau},z\br\\
&\quad\quad-\nu(\nabla w_0^{n+\frac12}, \nabla z)_s - \bl \lambda_{0}^{n+\frac12}, z\br-a\nu\bl\nabla w^{n+\frac12}_0\cdot\bm{\tau},z\br, \quad &&  \forall z \in \tilde{V}_s,\nonumber \\
&(\pdt u_1^{n+1}, v)_f+\nu(\nabla u_1^{n+1}, \nabla v)_f- \bl \lambda_1^{n+1}, v\br -a\nu\bl\nabla u^{n+1}_1\cdot\bm{\tau},v\br\quad && \nonumber\\
&=\nu(\nabla u_0^{n+1}, \nabla v)_f- \bl \lambda_0^{n+1},v\br-a\nu\bl\nabla u^{n+1}_0\cdot\bm{\tau},v\br-\nu(\nabla u_0^{n+\frac12}, \nabla v)_f\\
&\quad\quad+ \bl \lambda_0^{n+\frac12}, v \br+a\nu\bl\nabla u^{n+\frac12}_0\cdot\bm{\tau},v\br, \quad && \forall v \in \tilde{V}_f,  \nonumber\\
& \bl (\lambda_1^{n+1}-\lambda_{1}^{n})+ \alpha (u_1^{n+1}-w_1^{n+1}) , \mu \br = \bl \lambda_0^{n+1}-\lambda_{0}^{n}, \mu \br  \quad &&  \forall \mu \in \tilde{V}_g.
\end{alignat}
\end{subequations}

\begin{remark}
  The difference between the original correction method and the modified correction method \eqref{defect1m}-\eqref{correction1m} is the definition of the Lagrange multiplier $\lambda^{n+1}$. The Lagrange multiplier $\lambda^{n+1}$ in \eqref{eq:correction} approximates $\nu\nabla \uu^{n+1}\cdot\mathbf{n}_f$ while the Lagrange multiplier $\lambda^{n+1}$ in \eqref{correction1m} approximates $b\nu\nabla\uu^{n+1}\cdot\mathbf{s}$. Note that if the interface $\Sigma$ is not perpendicular to two sides of the domain, $\lambda_0^{n+1}$ does not converge to $\nu\nabla\uu^{n+1}\cdot\mathbf{n}_f$ at those endpoints. That is why we modify the original method such that the Lagrange multiplier approximates $b\nu\nabla\uu^{n+1}\cdot\mathbf{s}$ (which is always $0$).
\end{remark}

We then show the numerical results of the modified method \eqref{correction1m} for Example \ref{ex:exdiri} in Table \ref{table:mcorrection}. We can clearly see the improvement of $e_{\lambda}$, $e_{1,\lambda}$ and $e_{du1}$ that match with our observation.

\begin{table}[h]
\begin{center}

\begin{tabular}{|c||c|c|c|c|c|c|c|c|c|c|}
\hline
$\Dt$ &  $e_{u1}$ &  rates & $e_{w1}$ &  rates  &  $e_{\lambda}$ & rates & $e_{1,\lambda}$   & rates &$e_{du1}$ &  rates  \\
\hline
$(1/2)^2$&2.20e-02&-&4.55e-02&-&7.65e-01&-&7.90e-01&-&2.31e-01&-\\
\hline
$(1/2)^3$&5.29e-03&2.06&5.43e-02&-0.26&5.19e-01&0.56&1.60e-01&2.30&3.32e-02&2.80\\
\hline
$(1/2)^4$&1.29e-02&-1.29&1.16e-02&2.22&2.33e-01&1.16&1.31e-01&0.29&7.09e-02&-1.09\\
\hline
$(1/2)^5$&7.37e-03&0.81&4.18e-03&1.48&6.57e-02&1.83&2.92e-02&2.17&3.60e-02&0.98\\
\hline
$(1/2)^6$&2.05e-03&1.84&1.63e-03&1.36&1.89e-02&1.80&4.59e-03&2.67&9.49e-03&1.92\\
\hline
$(1/2)^7$&5.09e-04&2.01&4.42e-04&1.88&6.78e-03&1.48&8.66e-04&2.41&2.32e-03&2.03\\
\hline
$(1/2)^8$&1.24e-04&2.04&1.12e-04&1.98&2.85e-03&1.25&1.85e-04&2.23&5.62e-04&2.04\\
\hline
$(1/2)^9$&3.03e-05&2.03&2.80e-05&2.00&1.31e-03&1.13&4.25e-05&2.12&1.40e-04&2.01\\
\hline
\end{tabular}
\end{center}
\caption{Error and convergence rates for modified defect-correction method \eqref{correction1m}}\label{table:mcorrection}
\end{table}

\section{Future directions}\label{sec:future}
There are several future directions. One, of course, is to prove Assumption \ref{assump:U0H2} for problems with more general interface. This seems to be a challenging problem and new ideas are needed. The second is to extend  the methodology to the wave-parabolic interface problem. Preliminary results are very promising. Once we accomplish this, we will be ready to tackle the linear FSI problem. Finally, we will consider the dynamic FSI problem.

\section*{Acknowledgement}
 This material is based upon work supported by the National Science Foundation under Grant No. DMS-1929284 while the last author was in residence at the Institute for Computational and Experimental Research in Mathematics in Providence, RI, during the "Numerical PDEs: Analysis, Algorithms, and Data Challenges" program. EB was supported by EPSRC grants EP/J002313/2 and EP/W007460/1. EB and MF were supported by the IMFIBIO Inria-UCL associated team.

\appendix

\section{Notations}\label{app:notations}

For readability, we collect various definitions in Section \ref{sec:predictsec} here.
We define $\mathcal{Y}$ as,
\begin{alignat*}{1}
\mathcal{Y}:=&  \frac{1}{\nu_s}\|\pt^3 \ww\|_{L^2(0,T;L^2(\Omega_s))}^2 + (\frac{1}{\nu_f}+1)\|\pt^3 \uu\|_{L^2(0,T;L^2(\Omega_f))}^2) \\
& + (\frac{\nu_f}{\alpha^2}+\frac{\nu_f^2}{\alpha})  \|\pt^2 \uu\|_{L^2(0,T;H^1(\Omega_f))}^2+ \frac{(\nu_f)^3}{\alpha^2}  \|\pt^2 \uu\|_{L^2(0,T;H^2(\Omega_f))}^2\\
& +\frac{\alpha^2}{\nu_s}\|\pt^2 \uu\|_{L^2(0,T;L^2(\Sigma))}^2 + \frac{1}{\nu_f}\|\pt^2 \lla\|_{L^2(0,T;L^2(\Sigma))}^2+\frac{\nu_f^2}{\alpha^2}  \|\pt^2 \uu\|_{L^\infty(0,T;H^1(\Omega_f))}^2,
\end{alignat*}
define $\mathfrak{Y}$ as,
  \begin{alignat*}{1}
\mathfrak{Y}:=&  \frac{1}{\nu_s}\|\pt^4 \ww\|_{L^2(0,T;L^2(\Omega_s))}^2 + (\frac{1}{\nu_f}+1)\|\pt^4 \uu\|_{L^2(0,T;L^2(\Omega_f))}^2) \\
& + (\frac{\nu_f}{\alpha^2}+\frac{\nu_f^2}{\alpha})  \|\pt^3 \uu\|_{L^2(0,T;H^1(\Omega_f))}^2+ \frac{(\nu_f)^3}{\alpha^2}  \|\pt^3 \uu\|_{L^2(0,T;H^2(\Omega_f))}^2\\
& +\frac{\alpha^2}{\nu_s}\|\pt^3 \uu\|_{L^2(0,T;L^2(\Sigma))}^2 + \frac{1}{\nu_f}\|\pt^3 \lla\|_{L^2(0,T;L^2(\Sigma))}^2+\frac{\nu_f^2}{\alpha^2}  \|\pt^3 \uu\|_{L^\infty(0,T;H^1(\Omega_f))}^2, 
\end{alignat*}
define $Y$ as,
\begin{equation*}
  \begin{aligned}
Y:=&  \frac{1}{\nu_s}\|\p_x\pt^2 \ww\|_{L^2(0,T;L^2(\Omega_s))}^2 + (\frac{1}{\nu_f}+1)\|\p_x\pt^2 \uu\|_{L^2(0,T;L^2(\Omega_f))}^2) \\
& + (\frac{\nu_f}{\alpha^2}+\frac{\nu_f^2}{\alpha})  \|\p_x\pt \uu\|_{L^2(0,T;H^1(\Omega_f))}^2+ \frac{(\nu_f)^3}{\alpha^2}  \|\p_x\pt \uu\|_{L^2(0,T;H^2(\Omega_f))}^2\\
& +\frac{\alpha^2}{\nu_s}\|\p_x\pt \uu\|_{L^2(0,T;L^2(\Sigma))}^2 + \frac{1}{\nu_f}\|\p_x\pt \lla\|_{L^2(0,T;L^2(\Sigma))}^2+\frac{\nu_f^2}{\alpha^2}  \|\p_x\pt \uu\|_{L^\infty(0,T;H^1(\Omega_f))}^2,
\end{aligned}
\end{equation*}
and define $\mathbb{Y}$ as,
\begin{equation*}\label{eq:bbY}
\begin{aligned}
 \mathbb{Y}:=&  \frac{1}{\nu_s}\|\p_x\pt^3 \ww\|_{L^2(0,T;L^2(\Omega_s))}^2 + (\frac{1}{\nu_f}+1)\|\p_x\pt^3 \uu\|_{L^2(0,T;L^2(\Omega_f))}^2) \\
& + (\frac{\nu_f}{\alpha^2}+\frac{\nu_f^2}{\alpha})  \|\p_x\pt^2 \uu\|_{L^2(0,T;H^1(\Omega_f))}^2+ \frac{(\nu_f)^3}{\alpha^2}  \|\p_x\pt^2 \uu\|_{L^2(0,T;H^2(\Omega_f))}^2\\
& +\frac{\alpha^2}{\nu_s}\|\p_x\pt^2 \uu\|_{L^2(0,T;L^2(\Sigma))}^2 + \frac{1}{\nu_f}\|\p_x\pt^2 \lla\|_{L^2(0,T;L^2(\Sigma))}^2+\frac{\nu_f^2}{\alpha^2}  \|\p_x\pt^2 \uu\|_{L^\infty(0,T;H^1(\Omega_f))}^2.
\end{aligned}  
\end{equation*}

\bibliographystyle{abbrv}
\bibliography{referencesWP}

\end{document}